\newtheorem{theorem}{Theorem}[section]
\newtheorem{lemma}[theorem]{Lemma}
\newtheorem{proposition}[theorem]{Proposition}
\newtheorem{corollary}[theorem]{Corollary}
\theoremstyle{definition}
\newtheorem{remark}[theorem]{Remark}
\newtheorem{definition}[theorem]{Definition}
\numberwithin{equation}{section}
\begin{document}

\title[Nearly-circular periodic solutions of perturbed relativistic Kepler problems]{Nearly-circular periodic solutions \\of perturbed relativistic Kepler problems:\\the fixed-period and the 
fixed-energy problems}

\author[A.~Boscaggin]{Alberto Boscaggin}

\address{
Dipartimento di Matematica ``Giuseppe Peano''\\ 
Universit\`{a} degli Studi di Torino\\
Via Carlo Alberto 10, 10123 Torino, Italy}

\email{alberto.boscaggin@unito.it}

\author[G.~Feltrin]{Guglielmo Feltrin}

\address{
Dipartimento di Scienze Matematiche, Informatiche e Fisiche\\ 
Universit\`{a} degli Studi di Udine\\
Via delle Scienze 206, 33100 Udine, Italy}

\email{guglielmo.feltrin@uniud.it}

\author[D.~Papini]{Duccio Papini}

\address{
Dipartimento di Scienze e Metodi dell'Ingegneria\\
Universit\`{a} degli Studi di Modena e Reggio Emilia\\
Via Giovanni Amendola 2, 42122 Reggio Emilia, Italy}

\email{duccio.papini@unimore.it}

\thanks{Work written under the auspices of the Grup\-po Na\-zio\-na\-le per l'Anali\-si Ma\-te\-ma\-ti\-ca, la Pro\-ba\-bi\-li\-t\`{a} e le lo\-ro Appli\-ca\-zio\-ni (GNAMPA) of the Isti\-tu\-to Na\-zio\-na\-le di Al\-ta Ma\-te\-ma\-ti\-ca (INdAM). 
The first and second authors are supported by the INdAM-GNAMPA Project 2023 ``Analisi qualitativa di problemi differenziali non lineari'' and PRIN 2022 ``Pattern formation in nonlinear phenomena''.
\\
\textbf{Preprint -- May 2024}} 

\subjclass{34C25, 70H12, 70H40.}

\keywords{Periodic solutions, Relativistic Kepler problem, Bifurcation, Hamiltonian systems.}

\date{}

\dedicatory{}

\begin{abstract}
The paper studies the existence of periodic solutions of a perturbed relativistic Kepler problem of the type
\begin{equation*}
\dfrac{\mathrm{d}}{\mathrm{d}t}\left(\frac{m\dot{x}}{\sqrt{1-|\dot{x}|^{2}/c^{2}}}\right) =
-\alpha\frac{x}{|x|^{3}} + \varepsilon \, \nabla_{x} U(t,x), 
\qquad x \in \mathbb{R}^d\setminus\{0\},
\end{equation*}
with $d=2$ or $d=3$, bifurcating, for $\varepsilon$ small enough, from the set of circular solutions of the unperturbed system. Both the case of the fixed-period problem (assuming that $U$ is $T$-periodic in time) and the case of the fixed-energy problem (assuming that $U$ is independent of time) are considered.
\end{abstract}

\maketitle

\section{Introduction and statement of the main results}\label{section-1}

In this paper, we investigate the existence of periodic solutions for the nonlinear Lagrangian system
\begin{equation}\label{eq:Tfissato}
\dfrac{\mathrm{d}}{\mathrm{d}t}\left(\frac{m\dot{x}}{\sqrt{1-|\dot{x}|^{2}/c^{2}}}\right) =
-\alpha\frac{x}{|x|^{3}} + \varepsilon \, \nabla_{x} U(t,x),
\qquad x \in \mathbb{R}^d\setminus\{0\},
\end{equation}
where $d=2$ or $d=3$, $m,c,\alpha$ are positive constants,
$U \colon \mathbb{R} \times (\mathbb{R}^d\setminus\{0\}) \to \mathbb{R}$ is an external potential which is periodic in the time variable,
and $\varepsilon$ is a real parameter.
More precisely, we are interested in the existence of periodic solutions to \eqref{eq:Tfissato} which stay close, for $\varepsilon$ small enough, to circular solutions of the system with $\varepsilon = 0$,
both in the case of the fixed-period problem (that is,
$U$ is $T$-periodic in time, and we look for $T$-periodic solutions) and in the case of the fixed-energy problem (that is, $U$ is independent of time, and we look for solutions having a prescribed value of energy, independent of $\varepsilon$).

The interest for system \eqref{eq:Tfissato} lies in the fact that it can be regarded as a perturbation of the problem
\begin{equation}\label{eq:nonperturbato}
\dfrac{\mathrm{d}}{\mathrm{d}t}\left(\frac{m\dot{x}}{\sqrt{1-|\dot{x}|^{2}/c^{2}}}\right) =
-\alpha\frac{x}{|x|^{3}}, \qquad x \in \mathbb{R}^d\setminus\{0\},
\end{equation}
which in turn can be interpreted as a Kepler problem with relativistic correction. Indeed, equation \eqref{eq:nonperturbato}
appears when replacing the usual Keplerian Lagrangian of classical mechanics  
$\mathcal{L}_0^{\textrm{cl}}(x,\dot x)=\frac{1}{2}m\vert \dot x \vert^2+\alpha/\vert x \vert$ with the relativistic Lagrangian
\begin{equation}\label{defL}
\mathcal{L}_0(x,\dot x) =  mc^2 \left( 1 - \sqrt{1- \dfrac{\vert \dot x \vert^2}{c^2}} \right) + \frac{\alpha}{\vert x \vert}, 
\end{equation}
where $c$ has the role of speed of light (incidentally, notice that $\mathcal{L}_0 \to \mathcal{L}_0^{\textrm{cl}}$ in the non-relativistic limit
$c \to +\infty$), see \cite{BoDaFe-2021,BDF4} for an extensive bibliography on this subject. 
Even more naturally, problem \eqref{eq:nonperturbato} can be meant in the context of electrodynamics: indeed, it appears as a particular case of the Lorentz force equation, describing the motion of a slowly accelerated charged particle under the influence of an attractive Coulomb field, see \cite{ArBeTo-19,BDP2} and the references therein for more details and bibliography.

In \cite{BoDaFe-2021}, the perturbed problem \eqref{eq:Tfissato} was considered in the planar case $d=2$. More precisely, it was shown therein that it is possible to detect $T$-periodic solutions which stay close to non-circular (``rosetta''-shaped) $T$-periodic solutions of the unperturbed problem. Notice that, whenever one of such non-circular solutions, say $x^*$, exists, by the autonomous nature and rotational invariance of the unperturbed problem, actually a whole manifold of solutions can be found: precisely, the two-dimensional torus made up of all time-translation and (planar) rotations of $x^*$. 
With this in mind, the result in \cite{BoDaFe-2021} can be interpreted in a bifurcation spirit: that is, in spite of the fact that the unperturbed periodic torus is generically destroyed by the perturbation, some periodic solutions of the unperturbed problem still ``survive'' for $\varepsilon \neq 0$ and small enough. 
Later, in \cite{BDF4}, an analogous theorem was established in the context of the fixed-energy problem. 

The above mentioned results were obtained by using action-angle coordinates, allowing to interpret problem \eqref{eq:Tfissato} as a nearly integrable Hamiltonian system, and the manifolds of solutions as unperturbed invariant periodic tori, with a fixed value of energy and angular momentum. Within this approach, bifurcation was obtained as a consequence of the fact that the unperturbed Hamiltonian satisfies some kind of KAM-type non-degeneracy condition. It is worth mentioning that the possibility of extending this technique to the spatial case $d=3$ appears rather unclear, since the required non-degeneracy condition fails in three dimensions. We also mention \cite{BDP2,BDP1} for non-perturbative results, obtained via variational methods, for $d=3$ and $d=2$, respectively. 

Motivated by the above achievements, in this paper we deal again with the perturbed problem \eqref{eq:Tfissato}, focusing on the complementary case of solutions bifurcating from circular solutions of the unperturbed problem. To the best of our knowledge, such an issue is basically unexplored in the literature. More precisely, the only result we are aware of is \cite[Corollary 9]{Ga-19}, dealing with the fixed-period problem for \eqref{eq:Tfissato} when $d=2$. However, such a result was designed for a larger class of systems and yields highly non-optimal conclusions when applied to the specific case of \eqref{eq:Tfissato} (in particular, a symmetry condition on the external potential $U$ was assumed in \cite{Ga-19}). Here, instead, we provide a systematic investigation of the problem, both in the case of the fixed-period problem and in the case of the fixed-energy problem, both when $d=2$ and (most notably) when $d=3$.

To illustrate our results, from now on we denote, for every $\tau > 0$, by $\mathcal{M}_\tau^d$ the set of circular solutions of \eqref{eq:nonperturbato} with minimal period $\tau$. It can be proved (see Section~\ref{section-2.2}) that $\mathcal{M}_\tau^d$ is actually a compact manifold. Precisely, if $x_{\tau}$ stands for a fixed circular solution of \eqref{eq:nonperturbato} with minimal period $\tau$, for $d=2$ the set $\mathcal{M}_\tau^d$ is a manifold diffeomorphic to the orthogonal group $O(2)$ and, in particular, has two connected components: the one given by the solutions of positive angular momentum (having the form $x_\tau(t-\theta)$, provided $x_\tau$ is chosen with positive angular momentum) and the one given by the solutions of negative angular momentum (having the form $x_\tau(-t-\theta)$). Instead, for $d=3$, the set $\mathcal{M}_\tau^d$ is a manifold diffeomorphic to the special orthogonal group $SO(3)$, being given by the circular solutions $M x_\tau$ on varying of $M \in SO(3)$. With this in mind, we now describe our results.

\subsection{The fixed-period problem}\label{section-1.1}

Let us first focus on the fixed-period problem. Thus, we assume that the potential $U$ is $T$-periodic in the time variable. Having fixed a circular solution $x_{\tau}$ of minimal period $\tau$, we can find $T$-periodic solutions of \eqref{eq:Tfissato} bifurcating from the manifold of circular solutions with minimal period $\tau = T/k$, where $k \geq 1$ is any integer. Precisely, the following result holds true. 

\begin{theorem}\label{teo:Tfissato}
Assume that $U \colon \mathbb{R}\times (\mathbb{R}^{d}\setminus\{0\}) \to \mathbb{R}$ is a
continuous function, which is $T$-periodic with respect to the first variable and two times continuously differentiable with respect to the second variable. Fix an integer $k \geq 1$ and, when $k\ge 2$, assume also that
\begin{equation}\label{hp-Tkj}
T\neq T^{*}_{k,j}:=\frac{2\pi\alpha jk^{3}}{mc^{3}(k^{2}-j^{2})^{\frac{3}{2}}}, \quad \text{for every $j=1,\ldots,k-1$.}
\end{equation}
Then, for each $\eta >0$ there exists $\varepsilon^{*}>0$ such that for every $\varepsilon\in (-\varepsilon^{*},\varepsilon^{*})$ the following holds.
\begin{itemize}
\item If $d=2$, there exist four $T$-periodic solutions $x_{i}$ of \eqref{eq:Tfissato} and four times $\theta_{i}\in\left[0,T/k\right)$, for $i=1,2,3,4$, such that
\begin{equation*}
|x_{i}(t)-x_{T/k}(t-\theta_{i})|<\eta,
\quad
\text{for every $t\in\mathbb{R}$ and $i=1,2$,}
\end{equation*}
and
\begin{equation*}
|x_{i}(t)-x_{T/k}(-t-\theta_{i})|<\eta,
\quad
\text{for every $t\in\mathbb{R}$ and $i=3,4$.}
\end{equation*}

\item If $d=3$, there exist four $T$-periodic solutions $x_{i}$ of \eqref{eq:Tfissato} and four matrices $M_{i}\in SO(3)$, for $i=1,2,3,4$, such that
\begin{equation*}
|x_{i}(t)-M_{i}x_{T/k}(t)|<\eta,
\quad
\text{for every $t\in\mathbb{R}$.}
\end{equation*}
\end{itemize}
\end{theorem}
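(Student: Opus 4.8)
The plan is to exploit the variational structure of \eqref{eq:Tfissato} and reduce, via a Lyapunov--Schmidt procedure, to a finite-dimensional problem on the compact manifold $\mathcal{Z}:=\mathcal{M}_{T/k}^{d}$ of circular solutions of minimal period $\tau:=T/k$, regarded as $T$-periodic loops. Indeed, in a tubular neighbourhood of such solutions --- where both the singularity at the origin and the constraint $|\dot{x}|<c$ coming from the relativistic Lagrangian \eqref{defL} are harmless --- the $T$-periodic solutions of \eqref{eq:Tfissato} are exactly the critical points, in the space of $T$-periodic $H^{1}$-loops, of the action functional
\[
\mathcal{A}_{\varepsilon}(x)=\int_{0}^{T}\bigl[\mathcal{L}_{0}(x,\dot{x})+\varepsilon\,U(t,x)\bigr]\,\mathrm{d}t,
\]
which is of class $C^{2}$ since $U$ is $C^{2}$ in $x$. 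For $\varepsilon=0$ every circular solution of minimal period $\tau$ is in particular $T$-periodic, and by the autonomous and rotationally invariant nature of \eqref{eq:nonperturbato} the whole set $\mathcal{Z}$ consists of critical points of $\mathcal{A}_{0}$; thus $\mathcal{Z}$ is a compact, boundaryless critical manifold of $\mathcal{A}_{0}$.

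The first --- and main --- step is to prove that $\mathcal{Z}$ is a \emph{non-degenerate} critical manifold, meaning that at each $z\in\mathcal{Z}$ the kernel of $\mathcal{A}_{0}''(z)$, equivalently the space of $T$-periodic solutions of the linearization of \eqref{eq:nonperturbato} along $z$, reduces to $T_{z}\mathcal{Z}$. This is where \eqref{hp-Tkj} enters. Linearizing the central-force problem \eqref{eq:nonperturbato} along a circular orbit and decomposing the variation into the motion within the orbital plane (radial and angular components) and --- when $d=3$ --- the motion transverse to that plane, one sees that: the transverse equation is, by rotational invariance, exactly $\ddot{z}+\omega^{2}z=0$ with $\omega=2\pi/\tau$ the orbital frequency, so all of its (two-dimensional) solution space is $\tau$-periodic and contributes the tiltings of the orbital plane; the angular component contributes the one-dimensional direction generated by the combined rotation/time-translation of $z$; and the radial component is a harmonic oscillator with the epicyclic frequency produced by the relativistic precession, which one computes to be $\omega_{r}=\omega/\gamma<\omega$, $\gamma$ being the Lorentz factor of the circular orbit. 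The radial equation therefore has a nontrivial $T$-periodic solution precisely when $k\,\omega_{r}/\omega=k/\gamma$ is an integer $j$, and since $0<k/\gamma<k$ this forces $j\in\{1,\dots,k-1\}$; a direct computation --- using the explicit radius and energy of the period-$\tau$ circular orbit --- identifies this resonant situation with $T\in\{T^{*}_{k,j}:j=1,\dots,k-1\}$. Hence, under \eqref{hp-Tkj}, the radial part carries no periodic solution and $\ker\mathcal{A}_{0}''(z)=T_{z}\mathcal{Z}$, of dimension $1$ if $d=2$ and $3$ if $d=3$, in agreement with $\dim O(2)$ and $\dim SO(3)$.

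Granted the non-degeneracy, a standard Lyapunov--Schmidt reduction along $\mathcal{Z}$ yields a neighbourhood $\mathcal{U}$ of $\mathcal{Z}$, a number $\varepsilon^{*}>0$, and a $C^{1}$ map $w_{\varepsilon}\colon\mathcal{Z}\to\mathcal{U}$ with $w_{0}\equiv 0$ and $\|w_{\varepsilon}\|\to 0$ as $\varepsilon\to0$, such that for $|\varepsilon|<\varepsilon^{*}$ the critical points of $\mathcal{A}_{\varepsilon}$ lying in $\mathcal{U}$ are exactly the loops $z+w_{\varepsilon}(z)$ with $z$ a critical point of the $C^{1}$ reduced functional $\Psi_{\varepsilon}(z):=\mathcal{A}_{\varepsilon}(z+w_{\varepsilon}(z))$ on $\mathcal{Z}$ (moreover $\varepsilon^{-1}(\Psi_{\varepsilon}-\Psi_{0})\to\Gamma$ in $C^{1}(\mathcal{Z})$, where $\Psi_{0}=\mathcal{A}_{0}|_{\mathcal{Z}}$ and $\Gamma(z)=\int_{0}^{T}U(t,z(t))\,\mathrm{d}t$, although only the smoothness of $\Psi_{\varepsilon}$ is needed below). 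It then remains to count the critical points of $\Psi_{\varepsilon}$ on the compact manifold $\mathcal{Z}$. If $d=2$, $\mathcal{Z}$ is diffeomorphic to $O(2)$, i.e.\ the union of two disjoint circles --- the positive- and the negative-angular-momentum circular solutions --- and a $C^{1}$ function on a circle has at least two critical points; if $d=3$, $\mathcal{Z}$ is diffeomorphic to $SO(3)\cong\mathbb{RP}^{3}$, whose Lusternik--Schnirelmann category equals $4$, so $\Psi_{\varepsilon}$ has at least four critical points. In either case we obtain four distinct critical points $z_{1},\dots,z_{4}$ of $\Psi_{\varepsilon}$, hence four distinct $T$-periodic solutions $x_{i}=z_{i}+w_{\varepsilon}(z_{i})$ of \eqref{eq:Tfissato}; since $\|w_{\varepsilon}(z_{i})\|\to0$ uniformly and the $H^{1}$-norm controls the uniform norm, shrinking $\varepsilon^{*}$ (depending on $\eta$) guarantees $\sup_{t}|x_{i}(t)-z_{i}(t)|<\eta$, where $z_{i}$ is of the form $x_{T/k}(t-\theta_{i})$ or $x_{T/k}(-t-\theta_{i})$ according to the component of $O(2)$ when $d=2$, and of the form $M_{i}x_{T/k}$ with $M_{i}\in SO(3)$ when $d=3$. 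This is precisely the assertion.

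The genuine obstacle is the non-degeneracy step: one has to carry out the Floquet analysis of the linearized relativistic Kepler problem along a circular orbit --- pinning down the epicyclic frequency and checking that the only resonant periods with $1\le j\le k-1$ are the $T^{*}_{k,j}$ --- and then verify that away from these resonances no $T$-periodic solution of the linearization exists beyond those forced by the symmetry group, so that $\ker\mathcal{A}_{0}''(z)$ attains its minimal possible dimension. (An equivalent route avoids the variational formulation: realize the $T$-periodic solutions as zeros of a displacement map for the flow of \eqref{eq:Tfissato}, apply a bifurcation-from-a-normally-nondegenerate-manifold-of-periodic-orbits argument, and conclude by the same topological count; the linearized analysis, and hence the main difficulty, is the same.)
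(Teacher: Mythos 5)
Your overall strategy (variational bifurcation from the critical manifold of circular solutions, a finite-dimensional reduction, Lusternik--Schnirelmann counting with $\mathrm{cat}=2$ on each circle of $O(2)$ and $\mathrm{cat}(SO(3))=4$, and a Floquet analysis in which \eqref{hp-Tkj} excludes the radial/epicyclic resonance) is the same as the paper's, and your identification of the epicyclic frequency $\omega/\gamma$ agrees with the paper's computed value $\omega'=\alpha^{2}m/L^{3}$. However, the functional-analytic foundation of your proof contains a genuine gap: you work with the \emph{Lagrangian} action $\mathcal{A}_{\varepsilon}(x)=\int_{0}^{T}[\mathcal{L}_{0}(x,\dot x)+\varepsilon U(t,x)]\,\mathrm{d}t$ on $T$-periodic $H^{1}$-loops and assert it is of class $\mathcal{C}^{2}$ ``since $U$ is $C^{2}$ in $x$''. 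This fails on two counts. First, the constraint $|\dot x|<c$ is \emph{not} harmless in an $H^{1}$-neighbourhood of the circular solutions: the $H^{1}$-norm gives no pointwise control of $\dot x$, so arbitrarily $H^{1}$-close loops can have $|\dot x|\ge c$ on a set of positive measure and the relativistic Lagrangian is not even defined there (a cut-off in $x$ handles the singularity at the origin, but a cut-off in $\dot x$ destroys the equation's meaning unless one also proves a posteriori velocity bounds). Second, and more seriously, even where defined the functional is not $\mathcal{C}^{2}$ on $H^{1}$: as recalled in the paper (Remark~\ref{lag-ham}, citing \cite{AbSc-01}), a Lagrangian action is twice continuously differentiable on $H^{1}$ only when the Lagrangian is exactly quadratic in $\dot x$, which the relativistic kinetic term is not; and passing to a stronger topology such as $H^{2}$ restores smoothness but destroys the Fredholm property of $\mathrm{d}^{2}\mathcal{A}_{0}$, which your Lyapunov--Schmidt reduction (and any critical-manifold bifurcation theorem) requires. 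So the object on which your entire reduction is performed does not have the regularity you use.

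The paper circumvents precisely this obstruction by working with the \emph{Hamiltonian} action functional: one passes to the system \eqref{eq-hs00} with Hamiltonian \eqref{eq-hamiltoniana}, cuts off the potential part (the kinetic part $mc^{2}\sqrt{1+|p|^{2}/(m^{2}c^{2})}$ is globally smooth with bounded second derivatives, so no velocity constraint appears), and obtains a $\mathcal{C}^{2}$ functional on the fractional space $H^{1/2}_{T}$ whose second differential at the circular solutions is Fredholm of index zero; an auxiliary lemma (Lemma~\ref{lemma-approx}) then converts $H^{1/2}$-closeness of critical points into the $L^{\infty}$-closeness demanded by the statement. Your remaining steps would survive this change of framework essentially unchanged, because the non-degeneracy computation is in the end a statement about the monodromy matrix of the linearized flow, independent of the variational setting; but as written, the claim that your Lagrangian functional is $\mathcal{C}^{2}$ on an $H^{1}$-neighbourhood of $\mathcal{Z}$ is false, and without it the Lyapunov--Schmidt reduction and the identification $\ker\mathcal{A}_{0}''(z)=T_{z}\mathcal{Z}$ have no rigorous meaning. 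You would also need to supply the actual Floquet computation (which you only sketch), since the equivalence between the resonance $k/\gamma=j$ and $T=T^{*}_{k,j}$ is exactly the quantitative content of hypothesis \eqref{hp-Tkj}.
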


Let us notice that in the planar case bifurcation is obtained from both the connected components of the manifold $\mathcal{M}_{T/k}^2$, while the statement in the spatial case reflects the fact that the manifold $\mathcal{M}_{T/k}^3$ is, instead, connected. The number of bifurcating solutions is in both cases related to a topological invariant associated with the manifold, see Remark~\ref{rem-cat} for more details.

Let us also highlight the crucial difference between the case $k=1$ (in which no restrictions on $U$ and $T$ are needed) and the case $k \geq 2$, which requires the further assumption \eqref{hp-Tkj}: this (rather unexpected) condition can be explained in terms of bifurcation of non-circular solutions for the unperturbed problem, see Remark~\ref{bif-rosette}.

We finally point out that, as a standard corollary of Theorem~\ref{teo:Tfissato}, it is possible to obtain an arbitrarily large number of $T$-periodic solutions: indeed, one can apply Theorem~\ref{teo:Tfissato} for an arbitrarily large number of different choices of the integer $k$, and the fact that the corresponding solutions stay close to different unperturbed circular solutions ensures that, for $\eta$ small enough, they are all distinct.
However, since the presence of condition \eqref{hp-Tkj} makes this argument slightly more delicate, we present it in Section~\ref{section-3.4}.

\subsection{The fixed-energy problem}\label{section-1.2}

We now assume that the potential $U$ is independent on time and we focus on the fixed-energy problem. Here, and henceforth, by energy we mean the quantity
\begin{equation}\label{eq:energia}
\mathcal{E}_\varepsilon(x):= \frac{mc^{2}}{\sqrt{1-|\dot{x}^{2}|/c^{2}}} -\frac{\alpha}{|x|} - \varepsilon \, U(x),
\end{equation}
which, as it can be easily verified, is constant along solutions of \eqref{eq:Tfissato} whenever $U$ is time-independent (see also Section~\ref{sec21} for a more formal derivation of the energy via Hamiltonian formulation).
Incidentally, let us notice that the above definition is the same as the one used in \cite{BoDaFe-2021}, while it differs by the additive constant $mc^2$ from the definition in \cite{BDF4}. 

Summing up, we thus deal with the problem
\begin{equation}\label{eq:Efissata}
\begin{cases}
\, \dfrac{\mathrm{d}}{\mathrm{d}t}\left(\dfrac{m\dot{x}}{\sqrt{1-|\dot{x}|^{2}/c^{2}}}\right) =
-\alpha\dfrac{x}{|x|^{3}} +\varepsilon \, \nabla U(x) , \qquad x \in \mathbb{R}^d\setminus\{0\},
\vspace{3pt} \\
\, \mathcal{E}_\varepsilon(x)=h,
\end{cases}
\end{equation}
where $h$ is a real constant. Actually, we will take $h \in (0,mc^{2})$, since, by Lemma~\ref{lem:circolare},
a periodic solution of the unperturbed problem \eqref{eq:nonperturbato} with energy $h$ exists if and only if $h$ belongs to that range of values, the correspondence between energy and minimal period being, moreover, bijective. With this in mind, the following result holds true. 

\begin{theorem}\label{teo:Efissata}
Assume that $U\in \mathcal{C}^{\infty}(\mathbb{R}^{d}\setminus\{0\})$ and fix $h\in(0,mc^{2})$.
Let $\tau$ be the minimal period of a circular solution $x_{\tau}$ of \eqref{eq:nonperturbato} with energy $\mathcal{E}_0(x_{\tau})=h$.
Then, for each $\eta >0$ there exists $\varepsilon^{*}>0$ such that for every $\varepsilon\in (-\varepsilon^{*},\varepsilon^{*})$ the following holds.
\begin{itemize}
\item If $d=2$, there exists a periodic solution $x$ of \eqref{eq:Efissata} with period $\sigma$ such that $|\sigma-\tau|<\eta$ and
\begin{equation*}
|x(t)-x_{\tau}(t)|<\eta, \quad \text{for every $t\in[0,\tau+\eta]$.}
\end{equation*}

\item If $d=3$, there exists a periodic solutions $x$ of \eqref{eq:Efissata} with period $\sigma$ and a matrix $M\in SO(3)$ such that  
$|\sigma-\tau|<\eta$ and
\begin{equation*}
 |x(t)-M x_{\tau}(t)|<\eta, \quad \text{for every $t\in[0,\tau+\eta]$.}
\end{equation*}
\end{itemize}
\end{theorem}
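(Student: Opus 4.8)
\medskip
\noindent\textbf{Proof strategy.}
The plan is to display the set of circular solutions of the unperturbed problem~\eqref{eq:nonperturbato} with energy $h$ as a \emph{non-degenerate} manifold of zeros of a suitable operator equation, and then, for $|\varepsilon|$ small, to produce a solution of the fixed-energy problem~\eqref{eq:Efissata} nearby via a Lyapunov--Schmidt reduction, the surviving solution arising as a critical point of a bifurcation function on a compact manifold. Concretely, I would first pass to the Hamiltonian formulation of Section~\ref{sec21}, so that~\eqref{eq:Efissata} amounts to finding a periodic orbit of $H_\varepsilon(x,p)=c\sqrt{m^{2}c^{2}+|p|^{2}}-\alpha/|x|-\varepsilon\,U(x)$ on the level set $\{H_\varepsilon=h\}$. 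Rescaling time $t=\sigma s$, I look for $1$-periodic maps $s\mapsto x(s)$, treating the unknown period $\sigma>0$ as an auxiliary real parameter; this recasts the problem as $\mathcal{F}(\varepsilon,\sigma,x)=0$ for a map $\mathcal{F}$ of class $\mathcal{C}^{1}$ (this is where $U\in\mathcal{C}^{\infty}$ is used) between Banach spaces of $1$-periodic functions, whose two components encode the rescaled ODE residual and the energy defect $\mathcal{E}_\varepsilon(x)-h$. For $\varepsilon=0$ and $\sigma$ near $\tau$, the zero set of $\mathcal{F}(0,\cdot,\cdot)$ is exactly $\{\sigma=\tau\}\times\widetilde{\mathcal{M}}_\tau^{d}$, the rescaled copy of the compact manifold of circular solutions of minimal period $\tau$ from Section~\ref{section-2.2} --- diffeomorphic to $O(2)$ for $d=2$ and to $SO(3)$ for $d=3$ --- all of whose elements carry energy exactly $h$ by Lemma~\ref{lem:circolare}.

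The technical heart is the non-degeneracy: at each $z_{0}\in\widetilde{\mathcal{M}}_\tau^{d}$, the differential of $\mathcal{F}(0,\cdot,\cdot)$ with respect to $(\sigma,x)$ should be Fredholm of index zero with kernel precisely $\{0\}\oplus T_{z_{0}}\widetilde{\mathcal{M}}_\tau^{d}$. The $\sigma$-direction is excluded from the kernel since $\sigma\mapsto\mathcal{E}_0$ of the circular orbit of period $\sigma$ is a diffeomorphism (again Lemma~\ref{lem:circolare}), so a first-order change of $\sigma$ produces a nonzero energy defect. That no further, resonant, directions enter the kernel is exactly where the relativistic term is indispensable: the variational equation around a circular orbit splits into an in-plane part, governed by the radial (epicyclic) frequency $\kappa$, and, when $d=3$, an out-of-plane part; a direct computation shows that the out-of-plane motion oscillates at precisely the orbital frequency $\omega$ --- as it does for \emph{any} central force, relativistic or not --- which accounts for the two infinitesimal tilts of the orbital plane, whereas for the relativistic problem one has $\kappa/\omega\in(0,1)$ strictly (perihelion precession), in contrast with the classical Keplerian identity $\kappa/\omega=1$, so that the only $1$-periodic in-plane solutions are the multiples of the time-translation generator $\dot z_{0}$. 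Hence the kernel has dimension $1=\dim O(2)$ for $d=2$ and $3=\dim SO(3)$ for $d=3$, matching $T_{z_{0}}\widetilde{\mathcal{M}}_\tau^{d}$. (The fact that the out-of-plane frequency equals $\omega$ for every central force is precisely what makes the spatial case $d=3$ amenable here, while the KAM-type non-degeneracy used in~\cite{BoDaFe-2021,BDF4} for rosetta-shaped orbits fails in three dimensions.)

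Given the non-degeneracy, a standard Lyapunov--Schmidt reduction (in the spirit of, e.g.,~\cite{Ga-19}) reduces, for $|\varepsilon|$ small, the search for zeros of $\mathcal{F}(\varepsilon,\cdot,\cdot)$ near $\widetilde{\mathcal{M}}_\tau^{d}$ to the search for a critical point of a $\mathcal{C}^{1}$ bifurcation function $\Gamma_\varepsilon$ on the compact manifold $\widetilde{\mathcal{M}}_\tau^{d}$, of the form $\Gamma_\varepsilon=\Gamma_0+O(\varepsilon)$, with $\Gamma_0$ the suitably normalised average of $U$ along the circular orbits, essentially $\Gamma_0(z_{0})\propto\int_{0}^{\tau}U\big(x_{z_{0}}(t)\big)\,\mathrm{d}t$. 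Since $\widetilde{\mathcal{M}}_\tau^{d}$ is a nonempty compact manifold without boundary, $\Gamma_\varepsilon$ attains its maximum, which is a critical point, and this yields a periodic solution $x$ of~\eqref{eq:Efissata} with period $\sigma=\tau+O(\varepsilon)$ lying within $O(\varepsilon)$ of the circular solution $x_\tau$ when $d=2$, or of $M x_\tau$ for a suitable $M\in SO(3)$ when $d=3$. Converting these estimates into the pointwise bounds $|\sigma-\tau|<\eta$ and $|x(t)-M x_\tau(t)|<\eta$ on $[0,\tau+\eta]$ via continuous dependence, and fixing the time-translation gauge (and, for $d=2$, working on one connected component of $O(2)$), finishes the argument; a Lusternik--Schnirelmann count, as in Theorem~\ref{teo:Tfissato} and Remark~\ref{rem-cat}, would yield several solutions, but one is enough here.

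I expect the non-degeneracy step to be the main obstacle. It rests on the quantitative assertion that the epicyclic frequency of the relativistic circular orbit is strictly below --- hence never an integer multiple of --- the orbital frequency, so that the linearised equation has no spurious $1$-periodic solution; this is exactly the point at which the relativistic correction cannot be removed (and, via the out-of-plane analysis, what makes $d=3$ work). A secondary difficulty is to set up the functional framework so that the period $\sigma$ enters as a genuine free parameter and the energy as a constraint in a way that keeps the reduced map Fredholm of index zero, so that the finite-dimensional bifurcation equation lives on $\widetilde{\mathcal{M}}_\tau^{d}$ itself.
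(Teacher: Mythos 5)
Your frequency analysis is exactly the computation the paper performs (in polar/spherical symplectic coordinates): the linearization around the circular orbit has eigenvalues $0,0,\pm i\omega'',\pm i\omega'$ with out-of-plane frequency $\omega''=\omega$ and in-plane (epicyclic) frequency $\omega'$ satisfying $\omega'/\omega=\sqrt{L^{2}c^{2}-\alpha^{2}}/(Lc)\in(0,1)$, so no resonance occurs and the fixed-point space of the monodromy has dimension $1$ ($d=2$) or $3$ ($d=3$); moreover your non-degeneracy (kernel of the linearized map with the period $\sigma$ free and the energy imposed as a scalar constraint) is essentially Weinstein's condition $(iii)$, i.e.\ $v=Pv+\lambda X_{\mathcal{H}_0}(z)$, which is what Section~\ref{section-4.2} verifies. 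So the analytic heart of your plan matches the paper.

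The genuine gap is the reduction step. Your map $\mathcal{F}(\varepsilon,\sigma,x)$ (rescaled ODE residual plus energy defect) is \emph{not} a gradient, so after Lyapunov--Schmidt the bifurcation equation is a map into the finite-dimensional cokernel, not automatically the differential of a scalar function $\Gamma_\varepsilon$ on $\widetilde{\mathcal{M}}^{d}_\tau$; the sentence ``$\Gamma_\varepsilon$ attains its maximum, which is a critical point'' therefore has nothing to apply to as it stands. Compactness alone cannot replace the variational structure here: since $\chi(SO(3))=\chi(S^1)=0$, a generic vector field on the reduced manifold has no zeros, so no degree or Euler-characteristic argument can force a solution of a non-gradient bifurcation equation, and you assume no non-degeneracy of the averaged potential that could substitute for it. Exhibiting a variational principle for the \emph{fixed-energy} problem compatible with the reduction is precisely the delicate point (the paper stresses that the Lagrangian action is not $\mathcal{C}^2$ here, and that the Maupertuis route is nontrivial); this is exactly what Weinstein's theorem \cite{We-73} packages, using the exactness of the symplectic form to build an action-type functional, and it is why the paper invokes it rather than a bare implicit-function scheme. (A minor additional slip: the leading term of the bifurcation function is of order $\varepsilon$ --- at $\varepsilon=0$ it vanishes identically on the critical manifold --- so it should read $\Gamma_\varepsilon=\varepsilon\Gamma_1+o(\varepsilon)$ with $\Gamma_1$ the averaged potential, though this does not affect the maximization idea once a variational reduction is in place.)
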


Comparing Theorem~\ref{teo:Efissata} with Theorem~\ref{teo:Tfissato} for the fixed-period case, some remarks are in order. 
First, we observe that here bifurcation from the sole manifold $\mathcal{M}_\tau^d$ must be considered: the fixed-energy problem is indeed of purely geometrical nature, and solutions are considered on their minimal period. Accordingly, there is no need of a condition like \eqref{hp-Tkj}. Second, we notice that only one solution is obtained, for both $d=2$ and $d=3$: this is due to the fact that, whenever $x(t)$ is a solution of \eqref{eq:Efissata}, the functions 
$x(\pm t + \theta)$ are also solutions (to be considered geometrically equivalent) and it seems impossible to provide geometrically distinct solutions within our approach. 

Finally, we point out that the result for $d=2$ appears of rather different nature with respect to the one for $d=3$ (as well as
to the results in the fixed-period problem, for both $d=2$ and $d=3$). Indeed, as already mentioned, each of the connected components of the manifold $\mathcal{M}_\tau^2$ is topologically a circle: this is the ``minimal complexity'' for a set of periodic solutions of an autonomous problem, since, as just observed, time-translations of solutions are still solutions. 
 Hence, rather than a bifurcation-type result, what Theorem~\ref{teo:Efissata} gives in this case is the existence of a continuation of the unperturbed periodic orbit $\{x_\tau(t)\}_{t \in \mathbb{R}}$ into a perturbed periodic orbit $\{x(t)\}_{t \in \mathbb{R}}$. See Remark~\ref{rem-continuazione} for more details. We stress that, on the contrary, the three-dimensional case is different, since the manifold $\mathcal{M}_\tau^3$ has dimension $3$ and generically disintegrates into a finite number of periodic orbits, according to the usual bifurcation scenario. To the best of our knowledge, this is the first available result for the fixed-energy problem \eqref{eq:Efissata} in dimension $d=3$.

\subsection{Comments about the proof and plan of the paper}\label{section-1.3}

While the results in \cite{BoDaFe-2021,BDF4} about bifurcation from non-circular periodic solutions (in the planar case) were obtained by passing to action-angle coordinates and checking the KAM-nondegeneracy of the unperturbed problem, the proofs of Theorem~\ref{teo:Tfissato} and Theorem~\ref{teo:Efissata} require different strategies: indeed, usual action-angle coordinates are useless in a neighborhood of circular solutions, since, already in the planar case, energy and angular momentum do not provide independent first integrals, cf.~\cite[Chapter 3, Appendix C]{BeFa-notes}. 

More precisely, for the proof of Theorem~\ref{teo:Tfissato} about the fixed-period problem we rely on a well-established abstract bifurcation result of variational nature, providing critical points of a perturbed functional when a non-degenerate manifold of critical points of the unperturbed functional exists (see, for instance, \cite[Theorem~10.8]{MaWi-89}). Here non-degeneracy is meant in a very natural sense, and the application of the abstract theorem eventually requires to compute the dimension of the space of $T$-periodic solutions of the linearization of problem \eqref{eq:nonperturbato} around a circular solution (that is, the fixed points of the associated monodromy matrix). This strategy has been used in several other papers dealing with various models of classical mechanics, see \cite{AmCo-89,BA-2023,FoGa-18} and the references therein. We point out, however, that
the technical details of our proof are quite different with respect to the ones in the above mentioned papers. Indeed, the variational formulation  of the relativistic system \eqref{eq:Tfissato} must be obtained relying on its Hamiltonian structure, rather than on the Lagrangian one (see Remark~\ref{lag-ham} for more details); moreover, the proof of the non-degeneracy condition is obtained by performing a suitable symplectic transformation (based on polar coordinates when $d=2$ and on spherical coordinates when $d=3$) which allows to greatly simplify the computations.

As for the proof of Theorem~\ref{teo:Efissata} about the fixed-energy problem, instead, we apply an abstract bifurcation theory from periodic manifolds of Hamiltonian systems, developed by Weinstein in a series of papers \cite{We-73, We-77, We-78} (see also \cite{BDF4,BoOrZh-19} for an overview and some comments). We point out that the notion of non-degeneracy in the context of the fixed-energy problem is quite delicate and actually, even within the above mentioned theory, several non-equivalent formulations have been proposed. For our application, we rely on the weakest possible notion of non-degeneracy, considered in \cite{We-73, We-78}. This eventually requires us to consider again the 
linearization of problem \eqref{eq:nonperturbato} around a circular solution; however, differently from the fixed-period problem, 
a more delicate property of the associated monodromy matrix (rather than the dimension of the space of its fixed points) has to be
proved (see the discussion following Theorem~\ref{teo-weinstein0} for the details). As in the case of solutions with fixed period, we take advantage of the change of variables to polar/spherical coordinates to provide this proof. 

\smallskip

The plan of the paper is the following.
In Section~\ref{section-2}, we recall the Hamiltonian formulation of our problem and provide a change of variables which will simplify computations; we also describe in details the manifold of circular $\tau$-periodic solutions of the unperturbed problem.
Section~\ref{section-3} deals with the fixed-period problem or, more precisely, with the proof of Theorem~\ref{teo:Tfissato} and its consequences.
In a similar way, Section~\ref{section-4} is devoted to the proof of Theorem~\ref{teo:Efissata} about the fixed-energy problem.
For the reader's convenience, we summarize in Appendix~\ref{section-appendix} the way in which the monodromy operator is transformed after a change of variables, in order to justify the computations we made to check the non-degeneracy conditions.

\section{Preliminaries}\label{section-2}

In this section, we collect some preliminary facts and results about problem \eqref{eq:Tfissato} which will be used along the paper. More precisely, in Section~\ref{sec21} we provide the Hamiltonian formulation of \eqref{eq:Tfissato} and, based on this, a further symplectic change of variables to polar coordinates if $d=2$ and spherical coordinates if $d=3$. Then, in Section~\ref{section-2.2} we focus on the unperturbed problem \eqref{eq:nonperturbato}, providing a complete description of the set of circular solutions both for $d=2$ and for $d=3$.

\subsection{The Hamiltonian formulation and some useful changes of coordinates}\label{sec21}

As already mentioned in the Introduction, equation \eqref{eq:Tfissato}
has a Lagrangian structure; however, for our purposes, it is
essential to consider the equivalent  Hamiltonian formulation. This can be done (via Legendre transformation) defining the momentum variable
\begin{equation}\label{eq-momentop}
p=\dfrac{m\dot{x}}{\sqrt{1-\dfrac{|\dot{x}|^2}{c^2}}}
\end{equation}
and the Hamiltonian 
\begin{equation}\label{eq-hamiltoniana}
\mathcal{H}_\varepsilon(t,x,p)=mc^2\sqrt{1+\dfrac{|p|^2}{m^2c^2}}-\dfrac{\alpha}{|x|} - \varepsilon \, U(t,x).
\end{equation}
In this way,  equation \eqref{eq:Tfissato} turns out to be equivalent to the $d$ degrees of freedom Hamiltonian system
\begin{equation}\label{eq-hs00}
\dot x = \nabla_p \mathcal{H}_\varepsilon(t,x,p), \qquad
\dot p = - \nabla_x \mathcal{H}_\varepsilon(t,x,p),
\end{equation}
which explicitly reads as
\begin{equation}\label{eq-hs}
\begin{cases}
\, \dot x = \dfrac{p}{m \sqrt{1 + \dfrac{| p |^2}{m^2c^2}}}, \vspace{6pt}\\
\, \dot p = - \alpha \, \dfrac{x}{|x|^3} + \varepsilon \,\nabla_x U(t,x).
\end{cases}
\end{equation}
When the Hamiltonian $\mathcal{H}_\varepsilon$ does not depend on time, it is of course a first integral of system
\eqref{eq-hs}, and expressing $p$ in terms of $\dot x$ via \eqref{eq-momentop} provides the energy function defined as in \eqref{eq:energia}. 

From now on, we focus on the unperturbed system (i.e., $\varepsilon = 0$)
\begin{equation}\label{eq-hs0}
\begin{cases}
\, \dot x = \dfrac{p}{m \sqrt{1 + \dfrac{| p |^2}{m^2c^2}}}, \vspace{6pt}\\
\, \dot p = - \alpha \, \dfrac{x}{|x|^3},
\end{cases}
\end{equation}
and we introduce a symplectic change of variables, based on polar coordinates when $d=2$ and on spherical coordinates when $d=3$, cf.~\cite[Section~7.4 and Section~7.5]{MHO-09}, which will be very useful to simplify the computations in the next sections.

Let us first assume $d=2$. In this case, the change of variables is given by 
\begin{equation*}
\Psi \colon  (\mathbb{R}^2\setminus\{0\})\times \mathbb{R}^2 \to (0,+\infty)\times \mathbb{T}^1 \times \mathbb{R}^2, \qquad 
(x,p) \mapsto (r,\vartheta,l,\Phi), 
\end{equation*}
where $ \mathbb{T}^1 = \mathbb{R}/2\pi\mathbb{Z}$, $x = (x_1,x_2)$ is written in polar coordinates as 
\begin{equation*}
x=r (\cos\vartheta,\sin\vartheta)
\end{equation*}
and, with $p=(p_1,p_2)$,
\begin{equation}\label{def-l-Phi}
l = \frac{\langle x,p \rangle}{r}, \qquad \Phi = x_1 p_2 - x_2 p_1.
\end{equation}
The map $\Psi$ is symplectic, meaning that
\begin{equation*}
\mathrm{d}x \wedge \mathrm{d}p = \mathrm{d}r \wedge \mathrm{d}l + \mathrm{d}\vartheta \wedge \mathrm{d}\Phi. 
\end{equation*}
As a consequence, system \eqref{eq-hs0} is transformed into the Hamiltonian system
\begin{equation}\label{eq-hs2}
\begin{cases}
\, \dot r = \partial_l \mathcal{K}_0(r,\vartheta,l,\Phi) = \dfrac{l}{m}\dfrac{1}{\sqrt{1+\dfrac{l^2+\Phi^2/r^2}{m^2c^2}}}, 
\vspace{7pt}\\
\, \dot \vartheta = \partial_\Phi \mathcal{K}_0(r,\vartheta,l,\Phi) = \dfrac{\Phi}{mr^{2}}\dfrac{1}{\sqrt{1+\dfrac{l^2+\Phi^2/r^2}{m^2c^2}}}, 
\vspace{7pt}\\
\, \dot l = - \partial_r \mathcal{K}_0(r,\vartheta,l,\Phi) = \dfrac{\Phi^{2}}{m r^{3}} \dfrac{1}{\sqrt{1+\dfrac{l^2+\Phi^2/r^2}{m^2c^2}}} - \dfrac{\alpha}{r^{2}}, 
\vspace{7pt}\\
\, \dot \Phi = -\partial_\vartheta \mathcal{K}_0(r,\vartheta,l,\Phi) = 0, \vspace{2pt}
\end{cases}
\end{equation}
corresponding to the Hamiltonian
\begin{equation}\label{def-H0}
\mathcal{K}_0(r,\vartheta,l,\Phi)=mc^2\sqrt{1+\dfrac{l^2+\Phi^2/r^2}{m^2c^2}}-\dfrac{\alpha}{r}.
\end{equation}
Incidentally, notice that since $\dot\Phi = 0$, the quantity $\Phi = x_1 p_2 - x_2 p_1$ is a first integral, 
typically named the (scalar) angular momentum. 

Let us now assume $d=3$. In this case, we define
\begin{equation}\label{eq-cambiovar4}
\begin{aligned}
\Psi \colon  (\mathbb{R}^3\setminus\{x_1 = x_2 = 0\})\times \mathbb{R}^3 &\to (0,+\infty)\times \mathbb{T}^1 \times (0,\pi) \times \mathbb{R}^3, 
\\ 
(x,p) &\mapsto (r,\vartheta,\varphi,l,\Phi,f), 
\end{aligned}
\end{equation}
where $x = (x_1,x_2,x_3)$ is written in spherical coordinates as 
\begin{equation*}
x=r (\sin\varphi\cos\vartheta,\sin\varphi\sin\vartheta,\cos\varphi) 
\end{equation*}
and, with $p=(p_1,p_2,p_3)$,
\begin{equation*}
l = \frac{\langle x,p \rangle}{r}, 
\quad \Phi = x_1 p_2 - x_2 p_1, 
\quad f = r \left( p_1 \cos\varphi \cos\vartheta + p_2 \cos\varphi \sin\vartheta - p_3 \sin\varphi\right).
\end{equation*}
Similarly as before, the map $\Psi$ is symplectic in the sense that
\begin{equation*}
\mathrm{d}x \wedge \mathrm{d}p = \mathrm{d}r \wedge \mathrm{d}l + \mathrm{d}\vartheta \wedge \mathrm{d}\Phi +
\mathrm{d}\varphi \wedge \mathrm{d}f,
\end{equation*}
and, accordingly, system \eqref{eq-hs0} is transformed into the Hamiltonian system
\begin{equation}\label{eq-hs3}
\left\{\begin{array}{lclcl}
\dot r &=& \partial_l \mathcal{K}_0(r,\vartheta,l,\Phi) = \dfrac{l}{m}\dfrac{1}{\sqrt{1+\dfrac{l^2+f^2/r^2+\Phi^2/(r^2 \sin^2 \varphi)}{m^2c^2}}}, 
\vspace{7pt}\\
\dot \vartheta &=& \partial_\Phi \mathcal{K}_0(r,\vartheta,l,\Phi) = \dfrac{\Phi}{mr^{2} \sin^2\vartheta}\dfrac{1}{\sqrt{1+\dfrac{l^2+f^2/r^2+\Phi^2/(r^2 \sin^2 \varphi)}{m^2c^2}}}, \vspace{7pt}\\
\dot \varphi &=& \partial_f \mathcal{K}_0(r,\vartheta,l,\Phi) = \dfrac{f}{mr^2}\dfrac{1}{\sqrt{1+\dfrac{l^2+f^2/r^2+\Phi^2/(r^2 \sin^2 \varphi)}{m^2c^2}}}, \vspace{7pt}\\
\dot l &=& - \partial_r \mathcal{K}_0(r,\vartheta,l,\Phi) 
\vspace{7pt}\\
&=& \left( \dfrac{f^{2}}{m r^{3}} +\dfrac{\Phi^{2}}{m r^{3}\sin^2\vartheta} \right) \dfrac{1}{\sqrt{1+\dfrac{l^2+f^2/r^2+\Phi^2/(r^2 \sin^2 \varphi)}{m^2c^2}}} - \dfrac{\alpha}{r^{2}},
\vspace{7pt}\\
\dot \Phi &=& -\partial_\vartheta \mathcal{K}_0(r,\vartheta,l,\Phi) = 0, \vspace{7pt}\\
\dot f &=& - \partial_\varphi \mathcal{K}_0(r,\vartheta,l,\Phi) = \dfrac{\Phi^{2}\cos\varphi}{m r^{3} \sin^3\varphi} \dfrac{1}{\sqrt{1+\dfrac{l^2+f^2/r^2+\Phi^2/(r^2 \sin^2 \varphi)}{m^2c^2}}}, \vspace{2pt}
\end{array}
\right.
\end{equation}
corresponding to the Hamiltonian
\begin{equation}\label{defK03}
\mathcal{K}_0(r,\vartheta,\varphi,l,\Phi,f)=mc^2\sqrt{1+\dfrac{l^2+f^2/r^2+\Phi^2/(r^2 \sin^2 \varphi)}{m^2c^2}}-\dfrac{\alpha}{r}.
\end{equation}
Notice that, as in the case $d=2$, the variable $\Phi = x_1 p_2 - x_2 p_1$ is still a first integral, corresponding to the third component of the vector angular momentum $x \wedge p$ (which is itself a first integral, as it is easy to check). We also stress that, in our exposition, we have  assumed $\varphi \in (0,\pi)$, that is, the position $x \in \mathbb{R}^3$ does not belong to the $x_3$-axis, cf.~\eqref{eq-cambiovar4}. This is not a serious problem, because, by the conservation of the angular momentum $x \wedge p$, 
solutions $(x(t),p(t))$ of system \eqref{eq-hs0} are such that $x(t)$ belongs to a plane. In particular,
choosing $(\varphi,f) \equiv (\pi/2,0)$ provides the equatorial dynamics of system \eqref{eq-hs3}, which actually gives rise to system
\eqref{eq-hs2}.

\subsection{The manifold of circular solutions for the unperturbed problem}\label{section-2.2}

Let us consider the unperturbed problem
\begin{equation}\label{eq:nonperturbato2}
\dfrac{\mathrm{d}}{\mathrm{d}t}\left(\frac{m\dot{x}}{\sqrt{1-|\dot{x}|^{2}/c^{2}}}\right) =
-\alpha\frac{x}{|x|^{3}}, \qquad x \in \mathbb{R}^d\setminus\{0\},
\end{equation}
where, as usual, $d=2$ or $d=3$. 

\begin{definition}\label{defmanifold}
Given $\tau > 0$, we denote by 
$\mathcal{M}_\tau^d$ the set of circular (i.e., $\vert x(t) \vert \equiv R$ for some $R > 0$) non-constant periodic solutions to \eqref{eq:nonperturbato2} with minimal period $\tau$.
\end{definition}

Incidentally, we notice that, as it will be clear from the proof of Lemma~\ref{lem:circolare} below and the subsequent discussion, any circular solution to \eqref{eq:nonperturbato2} is actually non-constant and periodic (with constant angular velocity); therefore, from now on we will simply refer to $\mathcal{M}_\tau^d$ as the set of circular solutions to \eqref{eq:nonperturbato2} with minimal period $\tau$.

The rest of this section is devoted to clarify the structure of $\mathcal{M}_\tau^d$ (here meant as a subset of $\tau$-periodic continuous functions, with the topology of uniform convergence), which actually turns out to be a compact manifold, both for $d=2$ and $d=3$. We begin with the following basic result, dealing with the planar case.

\begin{lemma}\label{lem:circolare}
Let $d=2$. For every $\tau > 0$, there is a unique (up to planar rotations) circular solution $x_\tau$ of \eqref{eq:nonperturbato2} having minimal period $\tau$ and positive angular momentum, given by
\begin{equation}\label{xtau1}
x_\tau(t) = R \, (\cos(\omega t),\sin(\omega t)),
\end{equation}
where $\omega = 2\pi/\tau$ is the frequency and
$R >0$ is the unique solution of the equation
\begin{equation}\label{eq-lem:circolare}
m \omega^2  R^3 = \frac{\alpha}{c} \sqrt{c^2 - \omega^2 R^2}.
\end{equation}
Moreover, the correspondence $\tau \mapsto h(\tau)$ between period and energy is a strictly increasing homeomorphism from $(0,+\infty)$ onto the open interval $(0,mc^2)$.
\end{lemma}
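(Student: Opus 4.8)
The plan is to prove the statement in three steps: (i) verify that the explicit curve \eqref{xtau1} solves \eqref{eq:nonperturbato2} exactly when \eqref{eq-lem:circolare} holds, and that \eqref{eq-lem:circolare} is uniquely solvable in $R$; (ii) show that every circular solution of minimal period $\tau$ with positive angular momentum is, up to a planar rotation, of the form \eqref{xtau1}; (iii) analyse the period--energy correspondence. For step (i), I would plug the ansatz $x_\tau(t)=R(\cos(\omega t),\sin(\omega t))$ into \eqref{eq:nonperturbato2}: since $|\dot x_\tau|\equiv R\omega$ is constant (which already forces $R\omega<c$) and $\ddot x_\tau=-\omega^2 x_\tau$, the left-hand side equals $-\frac{m\omega^2}{\sqrt{1-R^2\omega^2/c^2}}\,x_\tau$, while the right-hand side is $-\frac{\alpha}{R^3}\,x_\tau$; equating the scalar coefficients gives exactly \eqref{eq-lem:circolare}. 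To solve \eqref{eq-lem:circolare}, I would study $g(R):=m\omega^2R^3-\frac{\alpha}{c}\sqrt{c^2-\omega^2R^2}$ on $[0,c/\omega)$: one has $g(0)=-\alpha<0$, $g(R)\to mc^3/\omega>0$ as $R\to(c/\omega)^-$, and $g$ is strictly increasing, whence a unique zero $R=R(\tau)\in(0,c/\omega)$.

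For step (ii), let $x$ be a circular solution, $|x(t)|\equiv R$. Since the unperturbed problem is autonomous, the energy $\mathcal{E}_0(x)=\frac{mc^2}{\sqrt{1-|\dot x|^2/c^2}}-\frac{\alpha}{R}$ is constant along $x$, so $|\dot x|$ is constant; in particular the solution cannot be a constant (otherwise the right-hand side of \eqref{eq:nonperturbato2} would be a nonzero vector equal to zero). Writing $x=R(\cos\theta,\sin\theta)$ with $\theta\in\mathcal{C}^2$, constancy of $|\dot x|=R|\dot\theta|$ forces $|\dot\theta|$ to be a positive constant, and continuity of $\dot\theta$ together with positivity of the angular momentum $\Phi=\frac{m R^2\dot\theta}{\sqrt{1-|\dot x|^2/c^2}}$ gives $\dot\theta\equiv\omega>0$; minimality of the period then yields $\omega=2\pi/\tau$, and substituting back shows that $R$ solves \eqref{eq-lem:circolare}, so $x(t)=x_\tau(t-\theta_0)$ for some $\theta_0$. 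This also justifies the remark following Definition~\ref{defmanifold}.

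For step (iii), I would introduce the speed parameter $v:=R\omega\in(0,c)$. Using $\omega=v/R$ in \eqref{eq-lem:circolare} gives $R=\frac{\alpha\sqrt{c^2-v^2}}{mcv^2}$, hence $\tau=\frac{2\pi R}{v}=\frac{2\pi\alpha\sqrt{c^2-v^2}}{mcv^3}$, and substituting into $h=\frac{mc^2}{\sqrt{1-v^2/c^2}}-\frac{\alpha}{R}$ yields the simple identity $h=mc\sqrt{c^2-v^2}$. Both $v\mapsto\tau(v)$ and $v\mapsto h(v)$ are then manifestly continuous and strictly decreasing on $(0,c)$, with $\tau(v)\to+\infty$ and $h(v)\to mc^2$ as $v\to0^+$, and $\tau(v)\to0^+$ and $h(v)\to0^+$ as $v\to c^-$; hence $\tau\mapsto v(\tau)$ is a decreasing homeomorphism of $(0,+\infty)$ onto $(0,c)$, and therefore $\tau\mapsto h(\tau)=h(v(\tau))$ is a strictly increasing homeomorphism of $(0,+\infty)$ onto $(0,mc^2)$, as claimed.

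All the computations above are elementary; the only point requiring genuine care is the rigidity claim in step (ii) — namely that, a priori, a circular solution need not move with constant angular speed — which is where one must invoke conservation of energy and of angular momentum rather than argue directly from the equation. I expect this to be the main (mild) obstacle.
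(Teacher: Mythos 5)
Your proof is correct, and it differs from the paper's at both substantive points. For the rigidity step (circular $\Rightarrow$ constant angular speed), the paper works in the symplectic polar coordinates of Section~\ref{sec21}: if $r=|x|$ is constant, the first equation of \eqref{eq-hs2} forces $l\equiv 0$, and then $\dot\vartheta$ is constant because $\Phi$ and $r$ are; you instead use conservation of $\mathcal{E}_0$ (to get $|\dot x|$ constant), a continuous angular lifting, and the sign of the angular momentum to fix the orientation. Both arguments are elementary and complete. The larger difference is in the period--energy correspondence: the paper writes $h$ through the quadratic relation \eqref{eq_HR} in $R$ and $L$, imports from \cite{BoDaFe-2021} the facts $0<h<mc^{2}$ and $L^{2}=\alpha^{2}m^{2}c^{2}/(m^{2}c^{4}-h^{2})$ to solve it as $R=\alpha h/(m^{2}c^{4}-h^{2})$ (strictly increasing in $h$), and then inverts \eqref{eq-lem:circolare} to see that $\omega^{2}$ is a strictly decreasing homeomorphism as a function of $R$; you instead parametrize the circular orbits by the speed $v=R\omega\in(0,c)$, obtaining the closed formulas $R=\alpha\sqrt{c^{2}-v^{2}}/(mcv^{2})$, $\tau=2\pi\alpha\sqrt{c^{2}-v^{2}}/(mcv^{3})$ and $h=mc\sqrt{c^{2}-v^{2}}$ (which I checked are consistent with the paper's $R=\alpha h/(m^{2}c^{4}-h^{2})$), from which strict monotonicity, the limiting values, and in particular the range $(0,mc^{2})$ are immediate. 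Your route is self-contained: it does not rely on the cited analysis and delivers the interval $(0,mc^{2})$ as an output rather than an input, whereas the paper's computation has the side benefit of producing the explicit radius--energy relation that reappears implicitly in Remark~\ref{bif-rosette} (the relation \eqref{formulaLR} of Remark~\ref{relazioneLR} is derived separately in either case).
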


\begin{proof}
Let us first observe that any circular solution of \eqref{eq:nonperturbato2} is non-constant and periodic, with constant angular velocity: indeed, from \eqref{eq-hs2} it is immediately seen that, if $r = \vert x \vert$ is constant, then $\dot \vartheta$ is constant, as well.
Hence, a circular solution $x_{\tau}$ of \eqref{eq:nonperturbato2} having minimal period $\tau$ and positive (scalar) angular momentum has the form \eqref{xtau1}, for some $R > 0$ and $\omega = 2\pi/\tau$. From equation \eqref{eq:nonperturbato2}, one easily obtains that $R$ is the unique solution of \eqref{eq-lem:circolare}.

In order to find the correspondence between $\tau$ and the energy $h$, we first
look for the relation between $R$ and $h$.
To do this, we notice on the one hand that, since $\ell=0$ by \eqref{def-l-Phi}, formula \eqref{def-H0} gives
\begin{equation*}
h = mc^2\sqrt{1+\dfrac{L^2}{m^2c^2R^{2}}}-\dfrac{\alpha}{R},
\end{equation*}
where $L > 0$ is the constant value of the angular momentum $\Phi$, 
and thus
\begin{equation}\label{eq_HR}
(h^{2} - m^{2} c^{4})R^{2} + 2\alpha h R + \alpha^{2} - L^2c^{2} = 0.
\end{equation}
On the other hand, from the analysis in \cite[p.~5820]{BoDaFe-2021}, we have that
\begin{equation}\label{analisiBDF}
0 < h < mc^{2}, \qquad \dfrac{\alpha^{2}}{c^{2}} < L^{2} = \dfrac{\alpha^{2}m^{2} c^{2}}{m^{2}c^{4}-h^{2}}.
\end{equation}
Therefore, simple computations show that equation \eqref{eq_HR} has the unique solution
\begin{equation*}
R = \dfrac{\alpha h}{m^{2}c^{4}-h^{2}}.
\end{equation*}
Notice that $R$ is strictly increasing as a function of $h$,
with $R\to 0$ as $h\to0$, and $R\to+\infty$ as $h\to mc^{2}$.

At this point, from \eqref{eq-lem:circolare} we find
\begin{equation*}
\omega^{2} = \dfrac{1}{2R^{6}} \left( - \biggl{(}\dfrac{\alpha R}{c}\biggr{)}^{\!2} + \sqrt{\biggl{(}\dfrac{\alpha R}{c}\biggr{)}^{\!4} + 4 \alpha^{2} R^{6}} \right)
\end{equation*}
We observe that the right-hand side of the above formula, as a function of $R$, is a strictly decreasing homeomorphism of $(0,+\infty)$. Since $\tau=2\pi/\omega$, we conclude that $(0,+\infty) \ni \tau \mapsto h(\tau) \in (0,mc^2)$ is a strictly increasing homeomorphism.
\end{proof}

\begin{remark}\label{relazioneLR}
For further convenience, we observe that, for the circular solution $x_\tau$, 
the radius $R$ can be written in terms of the angular momentum $L$ as
\begin{equation}\label{formulaLR}
R = \dfrac{L \sqrt{L^{2}c^{2}-\alpha^{2}}}{\alpha mc}.
\end{equation}
Indeed, from \eqref{eq-momentop} and \eqref{def-l-Phi} we find
\begin{equation}\label{formulaLR2}
L = \dfrac{m c \omega R^{2}}{\sqrt{c^2 - \omega^2 R^2}}
\end{equation}
and a combination of the above formula with \eqref{eq-lem:circolare} gives
\begin{equation}\label{formulaomegaR}
\omega = \dfrac{\alpha}{LR}.
\end{equation} 
Replacing into \eqref{formulaLR2} finally yields \eqref{formulaLR}.
\hfill$\lhd$
\end{remark}

From Lemma~\ref{lem:circolare}, an explicit characterization of 
$\mathcal{M}_\tau^d$ can be deduced, both in dimension $d=2$ and $d=3$ (compare to the discussion in \cite{BA-2023}, in the case of a Kepler problem with non-Newtonian potential).

Let us first deal with the case $d=2$. In this setting, on the one hand all circular solutions of \eqref{eq:nonperturbato2} with period $\tau$ and positive angular momentum are obtained as planar rotations of the solution $x_\tau(t)$ defined in \eqref{xtau1} (hence, being of the type $x_\tau(t-\theta)$ for some $\theta \in [0,\tau)$); on the other hand, circular solutions of minimal period $\tau$ and negative angular momentum can be obtained by a reflection with respect to the $x_1$-axis of the solution $x_\tau(t)$ (giving rise to the solution $x_\tau(-t)$) possibly followed by a planar rotation. 
Thus, the manifold $\mathcal{M}_\tau^2$ is homeomorphic to the orthogonal group $O(2)$ and, in particular, has two connected components: the one given by the solutions of positive angular momentum (having the form $x_\tau(t-\theta)$) and the one given by the solutions of negative angular momentum (having the form $x_\tau(-t-\theta)$).

We now focus on the more delicate case $d=3$. In this situation, we first observe that any solution of \eqref{eq:nonperturbato2} necessarily lies on a plane. Thus, we can apply Lemma~\ref{lem:circolare} to infer that the initial position $x(0)$ and the initial momentum $p(0)$ (cf.~\eqref{eq-momentop}) of any circular solution with minimal period $\tau$ satisfy
\begin{equation}\label{ci}
\vert x(0) \vert = R \quad \text{ and } \quad 
\vert p(0) \vert = \frac{mc \omega R}{\sqrt{c^2-\omega^2 R^2}}.
\end{equation}
To describe the global topology of the manifold 
$\mathcal{M}_\tau^3$, we then
observe that a vector $(x(0),p(0))$ satisfying \eqref{ci} actually gives rise to a circular solution if and only if 
\begin{equation}\label{ortogonali}
\langle x(0),p(0) \rangle = 0.
\end{equation} 
Combining \eqref{ci} and \eqref{ortogonali}, we thus infer that the manifold $\mathcal{M}_\tau^3$ (which is clearly homeomorphic to the set of initial conditions $(x(0),p(0))$) is homeomorphic to the unit tangent bundle of $\mathbb{S}^2$, which in turn (cf.~\cite[\S~III.1.4]{CuBa-15} for more details) is homeomorphic to the special orthogonal group $SO(3)$, via the map
\begin{equation*}
(x(0),p(0)) \mapsto M = \text{column} \left(\frac{x(0)}{\vert x(0) \vert},\frac{p(0)}{\vert p(0) \vert},\frac{x(0) \wedge p(0)}{\vert x(0) \wedge p(0)\vert}\right) \in SO(3).
\end{equation*}
In particular, if we fix the reference solution
\begin{equation}\label{xtau2}
x_\tau(t) = R(\cos (\omega t),\sin (\omega t),0),
\end{equation}
with $\omega$ and $R$ as in Lemma~\ref{lem:circolare}, the matrix $M$ corresponding to $x_\tau$ is simply $M=\mathrm{Id}_{\mathbb{R}^3}$.
From this, using the fact that
$M x \wedge My = M (x \wedge y)$ for every $x,y \in \mathbb{R}^3$ and $M \in SO(3)$, it is easy to see that, given $M \in SO(3)$, the matrix corresponding to the circular solution $M x_\tau(t)$ is exactly $M$. Hence, the manifold $\mathcal{M}_\tau^3$ can be characterized simply as the set of 
$\{ M x_\tau(t) \colon M \in SO(3) \}$.

Summing up, we have the following result. Note the slight abuse of notation, since $x_\tau$ now stands both for the planar circular solution given by \eqref{xtau1} and for the spatial circular solution given by \eqref{xtau2}, the correct interpretation being however clear from the context. 

\begin{proposition}\label{topologiaM}
The following holds true.
\begin{itemize}
\item The set $\mathcal{M}^2_\tau$ is a manifold homeomorphic to $O(2)$ and, precisely, 
\begin{equation*}
\mathcal{M}^2_\tau = \mathcal{M}^2_{\tau,+} \cup \mathcal{M}^2_{\tau,-},
\end{equation*}
where $\mathcal{M}^2_{\tau,\pm}$ are the connected components of $\mathcal{M}^2_\tau$ and are given by 
\begin{equation*}
\mathcal{M}^2_{\tau,+} 
= \bigl\{ M x_\tau(t) \colon M \in SO(2) \bigr\}
= \bigl\{ x_\tau (t - \theta) \colon \theta \in [0,\tau)\bigr\}
\end{equation*}
and
\begin{equation*}
\mathcal{M}^2_{\tau,-} 
= \bigl\{ M x_\tau(t) \colon M \in O(2) \setminus SO(2) \bigr\}
= \bigl\{ x_\tau (-t - \theta) \colon \theta \in [0,\tau) \bigr\}.
\end{equation*}
\item The set $\mathcal{M}^3_\tau$ is a manifold homeomorphic to $SO(3)$ and, precisely,
\begin{equation*}
\mathcal{M}^3_\tau = \bigl\{ M x_\tau(t) \colon M \in SO(3)\bigr\}.
\end{equation*}
\end{itemize}
\end{proposition}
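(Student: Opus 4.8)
The plan is to exhibit, in each dimension, an explicit continuous bijection from the relevant compact group onto $\mathcal{M}^d_\tau$ (equipped with the topology of uniform convergence on continuous $\tau$-periodic functions), and then to upgrade it to a homeomorphism by invoking the elementary fact that a continuous bijection from a compact space onto a Hausdorff space is a homeomorphism; the (topological) manifold structure of $\mathcal{M}^d_\tau$ is then inherited along this identification.

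For $d=2$ I would argue as follows. By Lemma~\ref{lem:circolare}, a circular solution of \eqref{eq:nonperturbato2} with minimal period $\tau$ and positive angular momentum is a planar rotation of the reference solution $x_\tau$ of \eqref{xtau1}; since $R(\cos(\omega t-\omega\theta),\sin(\omega t-\omega\theta))$ equals $M x_\tau(t)$ when $M$ is the rotation of angle $-\omega\theta$, these solutions are precisely the functions $t\mapsto M x_\tau(t)$ with $M\in SO(2)$, equivalently $x_\tau(t-\theta)$ with $\theta\in[0,\tau)$. A circular solution of negative angular momentum is the composition of such a one with the reflection $(x_1,x_2)\mapsto(x_1,-x_2)$, which carries $x_\tau(t)$ to $x_\tau(-t)$; hence these solutions are exactly $t\mapsto M x_\tau(t)$ with $M\in O(2)\setminus SO(2)$, equivalently $x_\tau(-t-\theta)$ with $\theta\in[0,\tau)$. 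Therefore $\Xi_2\colon O(2)\to\mathcal{M}^2_\tau$, $M\mapsto(t\mapsto M x_\tau(t))$, is a continuous surjection, and it is injective since $x_\tau(0)$ and $x_\tau(\tau/4)$ form a basis of $\mathbb{R}^2$. Thus $\Xi_2$ is a homeomorphism, and its restrictions to the two connected components $SO(2)$ and $O(2)\setminus SO(2)$ of $O(2)$ give the stated description of the components $\mathcal{M}^2_{\tau,\pm}$.

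For $d=3$ I would first reduce to the planar case. Since $x\wedge p$ is a first integral of \eqref{eq-hs0}, nonzero on circular (hence non-constant) solutions, every such solution lies in a fixed plane through the origin; working in that plane, Lemma~\ref{lem:circolare} shows that the initial datum $(x(0),p(0))$ of a circular solution of minimal period $\tau$ satisfies \eqref{ci} and \eqref{ortogonali}, and conversely, by uniqueness for the Cauchy problem together with the explicit planar formula, any pair satisfying \eqref{ci}--\eqref{ortogonali} is the initial datum of such a solution. Rescaling $p(0)$ to a unit vector then identifies $\mathcal{M}^3_\tau$ with $\{(\xi,v)\in\mathbb{S}^2\times\mathbb{S}^2:\langle\xi,v\rangle=0\}$, the unit tangent bundle of $\mathbb{S}^2$, which is homeomorphic to $SO(3)$ via the column map $(\xi,v)\mapsto\text{column}(\xi,v,\xi\wedge v)$, cf.~\cite[\S~III.1.4]{CuBa-15}. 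Composing these identifications, one obtains a homeomorphism $\mathcal{M}^3_\tau\to SO(3)$; with \eqref{xtau2} as reference solution it sends $x_\tau$ to $\mathrm{Id}_{\mathbb{R}^3}$, and using $M(a\wedge b)=Ma\wedge Mb$ for $M\in SO(3)$ one checks that it sends $t\mapsto M x_\tau(t)$ to $M$. Hence its inverse is $M\mapsto(t\mapsto M x_\tau(t))$ and $\mathcal{M}^3_\tau=\{M x_\tau(t):M\in SO(3)\}$.

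The points that require care, rather than routine algebra, are: the homeomorphism between $\mathcal{M}^d_\tau$ (a space of functions with the uniform topology) and the corresponding space of initial data $(x(0),p(0))$ — continuity in one direction uses that $p(0)$ is recovered from $\dot x(0)$, so one invokes either the explicit circular form or a single differentiation of the equation, and in the other direction it is continuous dependence of the solutions of \eqref{eq-hs0} on their initial data; the classical identification of the unit tangent bundle of $\mathbb{S}^2$ with $SO(3)$; and, for $d=3$, the fact that every solution of \eqref{eq:nonperturbato2} is planar. I expect the passage between the function-space topology and the initial-data topology to be the only genuinely delicate bookkeeping; everything else is immediate from Lemma~\ref{lem:circolare} and linear algebra.
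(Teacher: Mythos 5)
Your proposal is correct and takes essentially the same route as the paper: rotations and reflections of $x_\tau$ in the planar case, and for $d=3$ the reduction to planar motion via conservation of angular momentum, the identification of $\mathcal{M}^3_\tau$ with the set of initial data satisfying \eqref{ci}--\eqref{ortogonali} (i.e.\ the unit tangent bundle of $\mathbb{S}^2$), and the column map onto $SO(3)$ together with $M(a\wedge b)=Ma\wedge Mb$. The only difference is that you make explicit the topological bookkeeping (injectivity of $M\mapsto Mx_\tau$, the compact-to-Hausdorff argument, and the passage between the uniform topology and the initial-data topology) which the paper treats as clear.
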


\begin{remark}\label{rem-2.5}
Since in dimension $d=3$ problem \eqref{eq:nonperturbato2} is invariant under the full orthogonal group $O(3)$, one could wonder why, in the description of the manifold $\mathcal{M}_\tau^3$, only the special orthogonal group $SO(3)$ appears.
In fact $O(3)$ can be viewed as a trivial covering space of $\mathcal{M}_{\tau}^{3}$ by projecting each matrix $M\in O(3)$ to the unique $\tau$-periodic circular solution with
\begin{equation*}
x(0)=R c_{1} \quad\text{and}\quad p(0)=\frac{mc\omega R}{\sqrt{c^{2}-\omega^{2}R^{2}}}c_{2},
\end{equation*}
where $c_{1}$ and $c_{2}$ are the first and second columns of $M$, respectively.
Since the third column $c_{3}$ of $M\in O(3)$ can only be either $c_{1}\wedge c_{2}$ (if $M\in SO(3)$) or $-c_{1}\wedge c_{2}$ (if $M\not\in SO(3)$), we deduce that such covering has degree $2$ and is trivial since $O(3)$ is not connected.

As an example, assume we want to map the reference circular solution $x_{\tau}(t)$ in \eqref{xtau1} to $x_{\tau}(-t)$ and observe that they both lie on the plane $x_{3}=0$.
This can be done either by using a rotation $M^{+}\in SO(3)$ of $180^{\circ}$ around the $x_{1}$-axis or a reflection $M^{-}\in O(3)\setminus SO(3)$ with respect to the plane $x_{2}=0$.
On the other hand, the restrictions of $M^{+}$ and $M^{-}$ to the plane $x_{3}=0$ coincide both with a reflection in $\mathbb{R}^{2}$ with respect to the $x_{1}$-axis which lies in $O(2)\setminus SO(2)$.
\hfill$\lhd$
\end{remark}

\begin{remark}\label{rem-cat}
In the next sections, the Lusternik--Schnirelman categories of $\mathcal{M}^2_{\tau,\pm}$ and $\mathcal{M}^3_\tau$ will play a role (recall that the Lusternik--Schnirelman category of a topological space $X$ is the smallest number of closed and contractible sets needed to cover $X$). By known results, it holds that
\begin{equation*}
\mathrm{cat}\left( \mathcal{M}^2_{\tau,\pm}\right) = 2 \quad \text{ and } \quad \mathrm{cat}\left( \mathcal{M}^3_{\tau}\right) = 4,
\end{equation*}
cf.~for instance \cite[Lemma~4]{BA-2023}.
\hfill$\lhd$
\end{remark}

\section{The fixed-period problem}\label{section-3}

In this section, we provide the proof of Theorem~\ref{teo:Tfissato}, dealing with the fixed-period problem
associated with the system
\begin{equation}\label{eq:Tfissato3}
\dfrac{\mathrm{d}}{\mathrm{d}t}\left(\frac{m\dot{x}}{\sqrt{1-|\dot{x}|^{2}/c^{2}}}\right) =
-\alpha\frac{x}{|x|^{3}} +\varepsilon \, \nabla_{x} U(t,x) , \qquad x \in \mathbb{R}^d\setminus\{0\}, \\
\end{equation}
where $U \colon \mathbb{R} \times (\mathbb{R}^d \setminus \{0\}) \to \mathbb{R}$ is a
continuous function, which is $T$-periodic with respect to the first variable and two times continuously differentiable with respect to the second variable.

More precisely, in Section~\ref{section-3.1} we recall the abstract bifurcation result that we are going to use and we describe the general strategy of the proof. The verification of non-degeneracy condition, which is the key point for the application of the abstract result, is given in Section~\ref{section-3.2} and Section~\ref{section-3.3}, dealing respectively with the case $d=2$ and $d=3$. Finally, in Section~\ref{section-3.4} we present a corollary  of Theorem~\ref{teo:Tfissato}, providing abundance of $T$-periodic solutions of problem
\eqref{eq:Tfissato3}.

\subsection{The abstract bifurcation result and the strategy of the proof}\label{section-3.1}

As anticipated in the Introduction, the proof of Theorem~\ref{teo:Tfissato} will be based on a well-established abstract perturbation theorem of variational nature (see \cite[Theorem~2.1]{AmCoEk-87} and the references therein), which we recall here for the reader's convenience, in the version stated in \cite[Theorem~10.8]{MaWi-89}.

\begin{theorem}\label{thastratto1}
Let $H$ be a real Hilbert space, $\Omega \subset H$ be an open set, and $\mathcal{A}_\varepsilon \colon \Omega \to \mathbb{R}$
be a family of twice continuously differentiable functions depending smoothly on $\varepsilon$.
Moreover, let $\mathcal{M} \subset \Omega$ be a compact manifold (without boundary) such that:
\begin{itemize}
\item[$(i)$] $\mathrm{d}\mathcal{A}_0(z) = 0$, for every $z \in \mathcal{M}$;
\item[$(ii)$] $\mathrm{d}^2 \mathcal{A}_0(z)$ is a Fredholm operator of index zero, for every $z \in \mathcal{M}$;
\item[$(iii)$] $T_z \mathcal{M} = \mathrm{ker} (\mathrm{d}^2 \mathcal{A}_0(z))$, for every $z \in \mathcal{M}$.
\end{itemize}
Then, for every neighborhood $U$ of $\mathcal{M}$, 
there exists $\varepsilon^*> 0$ such that, 
if $\vert \varepsilon \vert < \varepsilon^*$, the functional
$\mathcal{A}_\varepsilon$ has at least $\mathrm{cat}(\mathcal{M})$ critical points in $U$.
\end{theorem}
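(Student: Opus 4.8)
The plan is to perform a Lyapunov--Schmidt reduction in a tubular neighbourhood of the critical manifold $\mathcal{M}$: hypotheses $(i)$--$(iii)$ say precisely that $\mathcal{M}$ is a \emph{non-degenerate} manifold of critical points of $\mathcal{A}_0$, so one expects the critical points of $\mathcal{A}_\varepsilon$ near $\mathcal{M}$ to be governed, for $\varepsilon$ small, by a single function on the finite-dimensional compact manifold $\mathcal{M}$ itself, to which classical Lusternik--Schnirelman theory then applies. Throughout, I identify $\mathrm{d}\mathcal{A}_\varepsilon$ with its Riesz representative $\nabla\mathcal{A}_\varepsilon\colon\Omega\to H$, so that $(ii)$--$(iii)$ read: $L_z:=\nabla^2\mathcal{A}_0(z)$ is a bounded self-adjoint Fredholm operator of index zero with $\ker L_z=T_z\mathcal{M}$.

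First I would set up the geometry. For $z\in\mathcal{M}$ put $W_z:=(T_z\mathcal{M})^{\perp}$; since $L_z$ is self-adjoint and Fredholm of index zero with kernel $T_z\mathcal{M}$, its range is exactly $W_z$ and $L_z|_{W_z}\colon W_z\to W_z$ is a linear isomorphism. As $\mathcal{A}_0\in\mathcal{C}^2$, the map $z\mapsto L_z$ is norm-continuous, and since $\mathcal{M}$ is compact the inverses $(L_z|_{W_z})^{-1}$ are uniformly bounded. Using the normal bundle of $\mathcal{M}$ in $H$, there is $\rho>0$ such that every $u\in H$ with $\mathrm{dist}(u,\mathcal{M})<\rho$ is written uniquely as $u=z+w$ with $z\in\mathcal{M}$, $w\in W_z$, $\|w\|<\rho$; this decomposition is as regular as $\mathcal{M}$, and after shrinking $\rho$ I may assume the resulting tube lies in the prescribed neighbourhood $U$.

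Next I would solve the auxiliary (bifurcation) equation. Writing $P_z$ for the orthogonal projection onto $W_z$, I look for $w\in W_z$ with
\begin{equation*}
F(\varepsilon,z,w):=P_z\,\nabla\mathcal{A}_\varepsilon(z+w)=0.
\end{equation*}
One has $F(0,z,0)=0$ by $(i)$, and $\partial_w F(0,z,0)=P_z L_z|_{W_z}=L_z|_{W_z}$ is an isomorphism of $W_z$; by the implicit function theorem, applied uniformly along the compact manifold $\mathcal{M}$ and exploiting the smooth dependence on $\varepsilon$, there exist $\varepsilon^*>0$ and a $\mathcal{C}^1$ map $(\varepsilon,z)\mapsto w(\varepsilon,z)\in W_z$, unique near the zero section, with $w(0,z)=0$ and $F(\varepsilon,z,w(\varepsilon,z))=0$; shrinking $\varepsilon^*$ we keep $z+w(\varepsilon,z)\in U$. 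Then I define the reduced functional $\Phi_\varepsilon\colon\mathcal{M}\to\mathbb{R}$, $\Phi_\varepsilon(z):=\mathcal{A}_\varepsilon(z+w(\varepsilon,z))$, which is $\mathcal{C}^1$ on $\mathcal{M}$. Differentiating in a local trivialisation of the normal bundle, where the complement is momentarily a fixed subspace, one checks that $\mathrm{d}\Phi_\varepsilon(z)[v]=\langle\nabla\mathcal{A}_\varepsilon(z+w(\varepsilon,z)),v\rangle$ for $v\in T_z\mathcal{M}$, because the contribution of the derivative of $w$ lies in $W_z$, on which $\nabla\mathcal{A}_\varepsilon(z+w(\varepsilon,z))$ already vanishes by the auxiliary equation. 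Hence a critical point $z_0$ of $\Phi_\varepsilon$ gives $\nabla\mathcal{A}_\varepsilon(z_0+w(\varepsilon,z_0))\perp T_{z_0}\mathcal{M}$ and $\perp W_{z_0}$, so $z_0+w(\varepsilon,z_0)\in U$ is a critical point of $\mathcal{A}_\varepsilon$; since $\mathcal{M}$ is a compact manifold without boundary, $\Phi_\varepsilon$ has at least $\mathrm{cat}(\mathcal{M})$ critical points, and $z_0\mapsto z_0+w(\varepsilon,z_0)$ is injective, which yields the claim.

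The hard part, I expect, will be the careful bookkeeping of the $z$-dependence of the normal spaces $W_z$ and projections $P_z$: one must run the implicit function theorem and differentiate $\Phi_\varepsilon$ uniformly over $\mathcal{M}$ and in a manner compatible with the bundle structure — for instance via local trivialisations with a fixed complement, rather than at a single base point. Compactness of $\mathcal{M}$ is precisely what makes the required uniformity (uniform invertibility of $L_z|_{W_z}$, a single tube radius $\rho$, a single $\varepsilon^*$) available, and it is used at every step.
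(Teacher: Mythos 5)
The paper itself does not prove Theorem~\ref{thastratto1}: it is quoted from \cite[Theorem~2.1]{AmCoEk-87} in the version of \cite[Theorem~10.8]{MaWi-89}, and your plan --- Lyapunov--Schmidt reduction in a tubular neighbourhood of the non-degenerate critical manifold, followed by Lusternik--Schnirelman theory for the reduced functional on the compact manifold $\mathcal{M}$ --- is precisely the argument of those sources. So the strategy is the right one; there is, however, one step whose justification, as written, is not correct and needs a (standard) repair.

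The issue is the computation of $\mathrm{d}\Phi_\varepsilon$. You assert that $\partial_z w(\varepsilon,z)[v]$ lies in $W_z$, so that it is annihilated by the auxiliary equation. This is false in general: $w(\varepsilon,\cdot)$ is a section of the normal bundle and the fibres $W_z$ rotate with $z$; differentiating the constraint $Q_z\,w(\varepsilon,z)=0$ (with $Q_z=\mathrm{Id}-P_z$ the projection onto $T_z\mathcal{M}$) gives $Q_z\,\partial_z w(\varepsilon,z)[v]=-\bigl(\partial_z Q_z[v]\bigr)w(\varepsilon,z)$, which vanishes only when $w=0$, i.e.\ at $\varepsilon=0$. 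Passing to a local trivialisation with a frozen complement does not remove this term but merely relocates it: either the solution of the auxiliary equation no longer lies in the frozen complement, or the projection used in the auxiliary equation no longer matches the orthogonality you invoke. Hence the identity $\mathrm{d}\Phi_\varepsilon(z)[v]=\langle\nabla\mathcal{A}_\varepsilon(z+w(\varepsilon,z)),v\rangle$ holds only up to an error of size $O(\|w(\varepsilon,z)\|)\,\|v\|$. The conclusion you need (critical points of $\Phi_\varepsilon$ yield critical points of $\mathcal{A}_\varepsilon$) is still true, but by a quantitative argument: by the auxiliary equation $\xi:=\nabla\mathcal{A}_\varepsilon(z_0+w(\varepsilon,z_0))\in T_{z_0}\mathcal{M}$, and if $z_0$ is critical for $\Phi_\varepsilon$ then $0=\langle\xi,v\rangle+\langle\xi,Q_{z_0}\partial_z w[v]\rangle$ for every $v\in T_{z_0}\mathcal{M}$; choosing $v=\xi$ and using $\|Q_{z_0}\partial_z w[v]\|\le C\,\|w(\varepsilon,z_0)\|\,\|v\|$ (with $C$ a bound for $\partial_z Q_z$ on the compact $\mathcal{M}$, which requires $\mathcal{M}$ to be at least of class $\mathcal{C}^2$) together with $\|w(\varepsilon,\cdot)\|\to 0$ uniformly as $\varepsilon\to 0$, one obtains $(1-C\|w\|)\|\xi\|^2\le 0$, hence $\xi=0$ for $|\varepsilon|$ small. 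With this lemma inserted, the remaining steps of your proposal (implicit function theorem run uniformly on the normal bundle, injectivity of $z_0\mapsto z_0+w(\varepsilon,z_0)$, and the category lower bound for the $\mathcal{C}^1$ function $\Phi_\varepsilon$ on the compact manifold $\mathcal{M}$) go through and give the theorem.
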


Some simple remarks about this theorem are in order. In particular, we notice that the three assumptions $(i)$, $(ii)$ and $(iii)$ all deal with the unperturbed functional $\mathcal{A}_0$. More precisely, condition $(i)$ simply requires that the manifold $\mathcal{M}$ is made up of critical points of $\mathcal{A}_0$ (that is, it is a so-called critical manifold). The assumptions $(ii)$ and $(iii)$, instead, are concerned with the second differential $\mathrm{d}^2 \mathcal{A}_0$ evaluated at points $z$ of the critical manifold. In particular, $(ii)$ is a structural assumption for 
$\mathrm{d}^2 \mathcal{A}_0(z)$ (meant, according to Riesz representation theorem, as a self-adjoint linear bounded operator on $H$), while $(iii)$ is a non-degeneracy condition for the critical manifold $\mathcal{M}$. In this regard, it is worth noticing that the inclusion $T_z \mathcal{M} \subset \mathrm{ker} (\mathrm{d}^2 \mathcal{A}_0(z))$ always holds true. Therefore, condition $(iii)$ is equivalent to the fact that the dimension of $\mathrm{ker} (\mathrm{d}^2 \mathcal{A}_0(z))$ coincides with the dimension of $\mathcal{M}$ (and, thus, is the least possible).

In order to prove Theorem~\ref{teo:Tfissato}, we are going to apply Theorem~\ref{thastratto1} to the Hamiltonian action functional associated with \eqref{eq:Tfissato3} (see Remark~\ref{lag-ham} for some comments about this choice). More precisely, let us recall that equation \eqref{eq:Tfissato3} can be equivalently written in Hamiltonian form as system \eqref{eq-hs00}, where the Hamiltonian $\mathcal{H}_\varepsilon$ is given by \eqref{eq-hamiltoniana}. Formally, this system admits the variational formulation
\begin{equation*}
\mathcal{A}_\varepsilon(z) = \mathcal{Q}(z) - \int_0^T \mathcal{H}_\varepsilon(t,x(t),p(t))\,\mathrm{d}t, 
\end{equation*}
where $z = (x,p)$ and $ \mathcal{Q}(z) =\int_0^T \langle p(t), \dot x(t) \rangle \,\mathrm{d}t$. As discussed, for instance, in \cite[Chapter 3]{Ab-01} a convenient rigorous formulation of this variational principle can be obtained (under some assumptions on the Hamiltonian) by working in the fractional Sobolev space
\begin{equation*}
H:= H^{\frac{1}{2}}_T = \biggl\{ z = (x,p) \in L^2 ( \mathbb{R}/T\mathbb{Z}, \mathbb{R}^{2d}) \colon 
\sum_{k \in \mathbb{Z}} \vert k \vert \vert \hat{z}_k \vert^2 < +\infty\biggr\},
\end{equation*}
where $\hat{z}_k \in \mathbb{C}^{2d}$ are the complex Fourier coefficients of the function $z$ (in this setting, the term $\mathcal{Q}(z)$
is meant as the unique continuous quadratic form obtained by extension, via the density of smooth functions in $H$, 
of the integral $\int_0^T \langle p(t), \dot x(t) \rangle \,\mathrm{d}t$, see again \cite{Ab-01} for more details).
  
In our setting, however, the presence of the singularity ($x= 0$) poses an extra difficulty: indeed, given any $z = (x,p) \in H$ with $x(t) \neq 0$ for every $t$, the functional $\mathcal{A}_\varepsilon$ is not well-defined in any neighborhood of $z$, since functions in the Sobolev space $H$ are not, in general, $L^\infty$. To overcome this technical issue, we argue as follows (cf.~\cite{Ga-19} for a similar approach).

Let us consider, according to Definition~\ref{defmanifold} and Proposition~\ref{topologiaM}, the compact manifold $\mathcal{M}$ given by 
\begin{equation}\label{sceltaM}
\mathcal{M} =
\begin{cases}
\, \mathcal{M}^2_{T/k,+} \text{ or } \mathcal{M}^2_{T/k,-}, &\text{if $d=2$,} \\
\, \mathcal{M}^3_{T/k}, & \text{if $d=3$.}
\end{cases}
\end{equation}
With a slight abuse of notation, here and throughout this section, the above objects will be meant as set of functions $z = (x,p)$ where $x$ is a circular solution to \eqref{eq:nonperturbato2} of minimal period $T/k$ and $p$ is given by \eqref{eq-momentop}, so that $z$ is a solution of \eqref{eq-hs0}. Clearly, $\mathcal{M} \subset H$ and, with the topology induced by the Hilbert norm, the homeomorphisms stated in Proposition~\ref{topologiaM} still hold true. 

Again by Proposition~\ref{topologiaM}, for any $z = (x,p) \in \mathcal{M}$ it holds that
$\vert x(t) \vert= R$ for every $t \in [0,T]$, with $R > 0$ independent of $z$.
Accordingly, we take a smooth cut-off function $\chi \colon [0,+\infty) \to [0,+\infty)$ such that
\begin{equation}\label{cutoff}
\chi(s) = \begin{cases}
\, 1, &  \text{ if } s \in \bigl(\frac{R}{2},\frac{3R}{2}\bigr), \\
\, 0, & \text{ if } s \in \bigl[0, \frac{R}{4}\bigr) \cup (2R,+\infty),
\end{cases}
\end{equation}
and then we define a modified Hamiltonian $\widehat{\mathcal{H}}_{\varepsilon} \colon \mathbb{R} \times \mathbb{R}^{2d} \to \mathbb{R}$ as
\begin{equation*}
\widehat{\mathcal{H}}_{\varepsilon}(t,x,p) = mc^2\sqrt{1+\dfrac{|p|^2}{m^2c^2}} - \chi(\vert x \vert) \left( \dfrac{\alpha}{|x|} + \varepsilon \, U(t,x)\right).
\end{equation*}
Since the above function is two times continuously differentiable with respect to $(x,p)$, with globally bounded first and second derivatives, standard results 
(see, for instance, \cite[Proposition~2.1]{BaSz-05}) guarantee that the corresponding Hamiltonian action functional
\begin{equation}\label{azione-modificata}
\widehat{\mathcal{A}}_{\varepsilon}(z) = \mathcal{Q}(z) - \int_0^T \widehat{\mathcal{H}}_{\varepsilon}(t,x(t),p(t))\,\mathrm{d}t
\end{equation}
is well-defined and of class $\mathcal{C}^2$ on the whole Hilbert space $H$.

Let us not define the constant
\begin{equation*}
\mathfrak{c} = (1 + \mathfrak{c}_1) \mathfrak{c}_2 \mathfrak{c}_3, 
\end{equation*}
where $\mathfrak{c}_1> 0$ is a Lipschitz constant for the function
\begin{equation}\label{def-Omega}
\mathbb{R}^d \ni p \mapsto G(p) := \dfrac{p}{m \sqrt{1 + \dfrac{| p |^2}{m^2c^2}}},
\end{equation}
$\mathfrak{c}_2> 0$ is a constant such that
\begin{equation*}
\Vert u \Vert_{L^{\infty}} \leq \mathfrak{c}_2 \bigl( \Vert u \Vert_{L^2} + \Vert \dot u \Vert_{L^2} \bigr), 
\quad \text{for every $u \in \mathcal{C}^1([0,T],\mathbb{R}^d)$,}
\end{equation*}
and $\mathfrak{c}_3> 0$ is a constant for the embedding $H \subset L^2 ( \mathbb{R}/T\mathbb{Z}, \mathbb{R}^{2d})$.
Accordingly, we consider the open set
\begin{equation*}
U_\eta = \bigl\{ z \in H \colon \mathrm{dist}_{H}(z,\mathcal{M}) < \eta/\mathfrak{c} \bigr\}.
\end{equation*}
Then, the following crucial lemma holds true.

\begin{lemma}\label{lemma-approx}
For every $\eta \in (0,R/2)$ and for every $\varepsilon \in \mathbb{R}$, if $z = (x,p)$ is a critical point of 
$\widehat{\mathcal{A}}_{\varepsilon}$ belonging to $U_\eta$, then 
$x$ is a $T$-periodic solution of \eqref{eq:Tfissato3} which satisfies
$\Vert x - x^* \Vert_{L^\infty} < \eta$ for some $z^* = (x^*,p^*) \in \mathcal{M}$.
\end{lemma}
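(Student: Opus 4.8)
The plan is to show that any critical point $z=(x,p)\in U_\eta$ of $\widehat{\mathcal{A}}_{\varepsilon}$ stays, in its first component, so close to $\mathcal{M}$ that the trajectory $x(t)$ never leaves the annular region $\{\,R/2<|x|<3R/2\,\}$ on which $\chi\equiv 1$; on that region the modified Hamiltonian $\widehat{\mathcal{H}}_{\varepsilon}$ coincides with $\mathcal{H}_{\varepsilon}$ together with all its first derivatives, and therefore $x$ actually solves \eqref{eq:Tfissato3}.

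First I would recall, from the standard theory of the Hamiltonian action functional on $H^{1/2}_T$ (see \cite{Ab-01} and \cite{BaSz-05}, already invoked above for the well-posedness and $\mathcal{C}^2$-regularity of $\widehat{\mathcal{A}}_\varepsilon$), that a critical point $z=(x,p)$ of $\widehat{\mathcal{A}}_{\varepsilon}$ is a classical $T$-periodic solution of the Hamiltonian system generated by $\widehat{\mathcal{H}}_{\varepsilon}$, namely $\dot x=\nabla_p\widehat{\mathcal{H}}_{\varepsilon}(t,x,p)=G(p)$ and $\dot p=-\nabla_x\widehat{\mathcal{H}}_{\varepsilon}(t,x,p)$, with $G$ as in \eqref{def-Omega}; note that the $p$-gradient of $\widehat{\mathcal{H}}_{\varepsilon}$ does not feel the cut-off $\chi$. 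Since $\mathcal{M}$ is compact in $H$, I may choose $z^{*}=(x^{*},p^{*})\in\mathcal{M}$ realizing $\mathrm{dist}_{H}(z,\mathcal{M})$, so that $\|z-z^{*}\|_{H}<\eta/\mathfrak{c}$; recall also that, by the convention fixed after \eqref{sceltaM}, $z^{*}$ solves \eqref{eq-hs0}, hence $\dot x^{*}=G(p^{*})$, while $|x^{*}(t)|\equiv R$ by Proposition~\ref{topologiaM}.

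The heart of the argument is then a short chain of estimates. Applying the embedding inequality with constant $\mathfrak{c}_2$ to the $T$-periodic $\mathcal{C}^1$ function $x-x^{*}$, then using $\dot x-\dot x^{*}=G(p)-G(p^{*})$ together with the Lipschitz bound $\mathfrak{c}_1$ for $G$, and finally the embedding $H\hookrightarrow L^2$ with constant $\mathfrak{c}_3$ (which dominates both $\|x-x^{*}\|_{L^2}$ and $\|p-p^{*}\|_{L^2}$ by $\mathfrak c_3\|z-z^{*}\|_H$), I obtain
\[
\|x-x^{*}\|_{L^{\infty}}
\le \mathfrak{c}_2\bigl(\|x-x^{*}\|_{L^2}+\|\dot x-\dot x^{*}\|_{L^2}\bigr)
\le (1+\mathfrak{c}_1)\,\mathfrak{c}_2\,\mathfrak{c}_3\,\|z-z^{*}\|_{H}
=\mathfrak{c}\,\|z-z^{*}\|_{H}<\eta .
\]
Since $\eta<R/2$ and $|x^{*}|\equiv R$, this forces $R/2<|x(t)|<3R/2$ for every $t$, so that $\chi(|x(t)|)\equiv 1$ and $\chi'(|x(t)|)\equiv 0$ along the trajectory (in particular $x(t)\neq 0$). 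Hence $\nabla_x\widehat{\mathcal{H}}_{\varepsilon}(t,x(t),p(t))=\nabla_x\mathcal{H}_{\varepsilon}(t,x(t),p(t))$ and, trivially, $\nabla_p\widehat{\mathcal{H}}_{\varepsilon}=\nabla_p\mathcal{H}_{\varepsilon}$ there; therefore $(x,p)$ solves \eqref{eq-hs} and $x$ is a $T$-periodic solution of \eqref{eq:Tfissato3}, with $\|x-x^{*}\|_{L^{\infty}}<\eta$, as claimed.

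I do not expect a serious obstacle here. The one step requiring some care is the regularity assertion that an $H^{1/2}_T$ critical point of $\widehat{\mathcal{A}}_\varepsilon$ is a genuine $\mathcal{C}^1$ solution of the associated Hamiltonian system; this is classical precisely because $\widehat{\mathcal{H}}_{\varepsilon}$ is $\mathcal{C}^2$ with globally bounded first and second derivatives (a standard bootstrap, cf.~again \cite{Ab-01,BaSz-05}). Everything else is the elementary chain displayed above, whose whole point is the very definition of $\mathfrak{c}$ and the observation that $\|\dot x-\dot x^{*}\|_{L^2}$ can be controlled through $\|p-p^{*}\|_{L^2}$ via the Lipschitz map $G$, so that only the $L^2$ — and not the full $H^{1/2}$ — distance between the momenta enters the estimate.
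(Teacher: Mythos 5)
Your proposal is correct and follows essentially the same route as the paper's proof: identify a nearby $z^*\in\mathcal{M}$ via compactness, run the chain of estimates through $\dot x-\dot x^*=G(p)-G(p^*)$ with the constants $\mathfrak{c}_1,\mathfrak{c}_2,\mathfrak{c}_3$, and conclude that $|x(t)|$ stays in $(R/2,3R/2)$ where the cut-off is inactive. The only additions are explicit remarks (regularity of $H^{1/2}$ critical points, $\chi'\equiv 0$ along the trajectory) that the paper leaves implicit.
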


\begin{proof}
Let $z = (x,p)$ be a critical point of $\widehat{\mathcal{A}}_{\varepsilon}$ belonging to $U_\eta$. Then, $z$ is a $T$-periodic solution of the Hamiltonian system with Hamiltonian $\widehat{\mathcal{H}}_{\varepsilon}$ and, in particular, 
$\dot x = G(p)$, where $G$ is as in \eqref{def-Omega}. Moreover, by the compactness of $\mathcal{M}$ we infer the existence of
$z^* = (x^*,p^*) \in \mathcal{M}$ such that $\Vert z - z^* \Vert_{H} < \eta/\mathfrak{c}$. 
Again, $\dot x^* = G(p^*)$. 
Using the definition of $\mathfrak{c}_1, \mathfrak{c}_2$ and $\mathfrak{c}_3$, we thus find
\begin{align*}
\Vert x - x^* \Vert_{L^{\infty}} & \leq  \mathfrak{c}_2 \left( \Vert x- x^* \Vert_{L^2} + \Vert \dot x - \dot x^* \Vert_{L^2} \right) 
\leq \mathfrak{c}_2 \left( \Vert x- x^* \Vert_{L^2} + \mathfrak{c}_1 \Vert p -  p^* \Vert_{L^2} \right) \\
& \leq (1+\mathfrak{c}_1) \mathfrak{c}_2  \Vert z- z^* \Vert_{L^2}
\leq (1+\mathfrak{c}_1) \mathfrak{c}_2 \mathfrak{c}_3  \Vert z- z^* \Vert_{H} < \eta.
\end{align*}
Since $\vert x^*(t) \vert \equiv R$, one has, 
\begin{equation*}
\vert \vert x(t) \vert - R \vert \leq \Vert x - x^* \Vert_{L^{\infty}} < \eta < \frac{R}{2}, \quad \text{for every $t \in [0,T]$.}
\end{equation*}
Then, $R/2 < \vert x(t) \vert < 3R/2$ for every $t$, and thus, by \eqref{cutoff}, $x$ is a $T$-periodic solution of the original equation  \eqref{eq:Tfissato3}.
\end{proof}

We are now in a position to summarize. Given $T > 0$ and $k \geq 1$, we consider the manifold $\mathcal{M}$ as in \eqref{sceltaM} and the associated modified action functional \eqref{azione-modificata}. Fixed an arbitrary $\eta > 0$ (which, without loss of generality, we can take smaller than $R/2$, so that Lemma~\ref{lemma-approx} applies), we are going to prove that the assumptions of Theorem~\ref{thastratto1} are satisfied. If this is the case, we obtain that there is $\varepsilon^* = \varepsilon^*(\eta) > 0$ such that, for $\varepsilon \in (-\varepsilon^*,\varepsilon^*)$, the functional $\widehat{\mathcal{A}}_{\varepsilon}$ possesses at least $\mathrm{cat}(\mathcal{M})$ critical points in the open set $U_\eta$ (hence, two critical points in the $2d$-case and four critical points in the $3d$-case, cf.~Remark~\ref{rem-cat}). By Lemma~\ref{lemma-approx}, these critical points are actually $T$-periodic solutions of 
\eqref{eq:Tfissato3} and satisfy $\Vert x - x^* \Vert_{L^{\infty}} < \eta$ for some $z^* = (x^*,p^*) \in \mathcal{M}$. Together with the characterization of the manifold $\mathcal{M}$ given in Proposition~\ref{topologiaM}, this finally proves Theorem~\ref{teo:Tfissato} (notice that in the $2d$-case the above argument has to be used both with $\mathcal{M}^2_{T/k,+}$ and $\mathcal{M}^2_{T/k,-}$ in order to get four solutions).

As a consequence, to conclude the proof one needs to check that the assumptions $(i)$, $(ii)$ and $(iii)$ of Theorem~\ref{thastratto1} hold true for the unperturbed (modified) action functional $\widehat{\mathcal{A}}_{0}$. Condition $(i)$, namely the fact that $\mathcal{M}$ is a critical manifold, is obvious, because every function $z \in \mathcal{M}$ has range in a region where the modified Hamiltonian $\widehat{\mathcal{H}}_{0}$ coincides with the original one, thus being a critical point of both $\mathcal{A}_0$ and $\widehat{\mathcal{A}}_{0}$. Condition $(ii)$, namely that 
$\mathrm{d}^{2} \widehat{\mathcal{A}}_{0}(z)$ is a Fredholm operator for every $z \in \mathcal{M}$, is a well-known fact, see \cite[p.~78]{Ab-01}.

It thus remains to check the non-degeneracy condition $(iii)$, that is, that the dimension of $\mathcal{M}$ coincides with the dimension of $\mathrm{ker} (\mathrm{d}^{2} \widehat{\mathcal{A}}_{0}(z))$, which as well known is nothing but the space of $T$-periodic solutions of the system obtained by linearizing the Hamiltonian system with Hamiltonian $\widehat{\mathcal{H}}_{0}$ around the $T$-periodic solution $z \in \mathcal{M}$. Since the range of $z$ is contained in a region where this Hamiltonian coincides with the original one, this system is just the one obtained by linearizing the unperturbed problem \eqref{eq-hs0} around $z$. Summing up, we thus have to prove that the dimension of the space of $T$-periodic solutions of the system obtained by linearizing \eqref{eq-hs0} around $z$ equals the dimension of $\mathcal{M}$. We are going to provide this proof in the next two sections, dealing respectively with the case $d=2$ and $d=3$.
 
\begin{remark}\label{lag-ham}
One could wonder why, for the variational formulation of equation \eqref{eq:Tfissato3}, we have passed to the equivalent Hamiltonian system, instead of relying (maybe more naturally) on the original Lagrangian structure. The reason is that the Lagrangian action functional
\begin{equation*}
\mathcal{I}_\varepsilon(x) = \int_0^T \mathcal{L}_0(x(t),\dot x(t)) \,\mathrm{d}t + \varepsilon \int_0^T U(t,x(t))\,\mathrm{d}t,
\end{equation*}
where $\mathcal{L}_0$ as in \eqref{defL}, is not of class $\mathcal{C}^2$ with respect to the $H^1$-topology. This is due not only to the fact that the relativistic kinetic energy is smoothly defined only when $\vert \dot x \vert < c$ (this difficulty could be overcome by cut-off arguments as the ones in this section) but, more seriously, to the fact that, as pointed out in \cite{AbSc-01}, the Lagrangian action functional is of class $\mathcal{C}^2$ if and only if $\mathcal{L}_0$ is exactly quadratic with respect to $\dot x$, which is not the case in our relativistic setting. Working in a stronger topology (like $H^2$, for instance), on the contrary, would make the functional $\mathcal{I}_\varepsilon$ smooth, but in this case the second differential would not be a Fredholm map. 
\hfill$\lhd$
\end{remark}

\subsection{Non-degeneracy in the $2d$-case}\label{section-3.2}

We consider the case $\mathcal{M} = \mathcal{M}^2_{T/k,+}$, the other being analogous.
Moreover, without loss of generality, we assume that $z^* = (x^*,p^*) \in \mathcal{M}^2_{T/k,+}$ is the solution of 
\eqref{eq-hs0} with
\begin{equation*}
x^*(t) = R (\cos(\omega t),\sin (\omega t)),
\end{equation*}
where 
\begin{equation}\label{eq-omega-T}
\omega = \frac{2\pi k}{T}
\end{equation}
and, according to Lemma~\ref{lem:circolare}, $R$ satisfies \eqref{eq-lem:circolare}.
Since the dimension of $\mathcal{M}^2_{T/k,+}$ is equal to one, we need to check that the space of $T$-periodic solutions
of the system obtained by linearizing \eqref{eq-hs0} around $z^*$ is one-dimensional, as well.

To verify this fact, we are going to make use of the change of variables to polar coordinates described in Section~\ref{sec21}. 
Precisely, instead of considering \eqref{eq-hs0} we deal with \eqref{eq-hs2}. In these new coordinates, the solution $z^*$ reads as
\begin{equation*}
\zeta^*(t) = (R,\omega t,0,L),
\end{equation*}
where, according to Remark~\ref{relazioneLR}, formula \eqref{formulaLR} relating $L$ and $R$ holds true.
As explained in Appendix~\ref{section-appendix}, the space of $T$-periodic solutions of the system obtained by linearizing \eqref{eq-hs0} around $z^*$ and the space of $T$-periodic solutions of the system obtained by linearizing \eqref{eq-hs2} at $\zeta^{*}$ are isomorphic as linear spaces, so that we can prove the validity of the non-degeneracy condition in the new coordinates.

At this point, with a long but elementary computation we find that the system obtained by linearizing \eqref{eq-hs2} at $\zeta^{*}$ is the autonomous system 
\begin{equation}\label{sistema-lineare1}
\dot w=\mathbb{A}w, \quad w \in \mathbb{R}^4,
\end{equation}
where $\mathbb{A}$ is the $4 \times 4$ real matrix given by
\begin{equation*}
\mathbb{A} =
\begin{pmatrix}
0 & 0 & A & 0 \vspace{1pt} \\
B & 0 & 0 & C \vspace{1pt} \\
D & 0 & 0 & -B \vspace{1pt} \\
0 & 0 & 0 & 0 
\end{pmatrix},
\end{equation*}
with coefficients
\begin{align*}
&A=\dfrac{cR}{\sqrt{m^{2}c^{2}R^{2}+L^{2}}},&
&B=\dfrac{L c(L^{2}-2(m^{2}c^{2}R^{2}+L^{2}))}{R^{2} (m^{2}c^{2}R^{2}+L^{2})^{\frac{3}{2}}},
\\
&C=\dfrac{m^{2}c^{3}R}{(m^{2}c^{2}R^{2}+L^{2})^{\frac{3}{2}}},&
&D=\dfrac{L^{4}c-3L^{2}c(m^{2}c^{2}R^{2}+L^{2}) + 2 \alpha (m^{2}c^{2}R^{2}+L^{2})^{\frac{3}{2}}}{R^{3} (m^{2}c^{2}R^{2}+L^{2})^{\frac{3}{2}}}.
\end{align*}
Using \eqref{formulaLR}, the above quantities can be written in terms of the angular momentum $L$ as
\begin{equation}\label{def-ABCD}
\begin{aligned}
&A=\dfrac{\sqrt{L^2c^2-\alpha^2}}{mLc},&
&B=\dfrac{\alpha^3m^2(\alpha^2-2L^2c^2)}{L^5(L^2c^2-\alpha^2)},
\\
&C=\dfrac{\alpha^2 m\sqrt{L^2c^2-\alpha^2}}{L^5c},&
&D=-\dfrac{\alpha^4m^3c}{L^5\sqrt{L^2c^2-\alpha^2}}.
\end{aligned}
\end{equation}

As well known, the space of (initial conditions of) $T$-periodic solutions of system \eqref{sistema-lineare1} is nothing but the eigenspace of the monodromy matrix $\mathbb{P} = e^{T\mathbb{A}}$ associated with the eigenvalue $1$. Thus, to conclude the proof we need to show that $1$ is an eigenvalue of $\mathbb{P}$ with geometric multiplicity equal to one.

In order to do this, we first compute the eigenvalues of $\mathbb{A}$.
A simple computation shows that the characteristic polynomial of the matrix $\mathbb{A}$ is given by
\begin{equation*}
\lambda^2 (\lambda^2 - AD)
\end{equation*}
and hence $\mathbb{A}$ has the complex eigenvalues $\{0, 0, \pm i\omega'\}$, where
\begin{equation}\label{eq:autovalori}
\omega'=\sqrt{-AD} = \dfrac{\alpha^2m}{L^3}.
\end{equation}
First, we claim that the geometric multiplicity of $0$ as eigenvalue of $\mathbb{A}$ (that is, the dimension of the kernel of $\mathbb{A}$,
i.e., the nullity of $\mathbb{A}$) is equal to one. 
Indeed, by the rank-nullity theorem, we infer
\begin{equation*}
\mathrm{nullity}(\mathbb{A}) = 4 - \mathrm{rank}(\mathbb{A})
\end{equation*}
and the rank of $\mathbb{A}$ is exactly three, since 
\begin{equation*}
B^2 + CD  = \dfrac{\alpha^6 m^4 }{L^{10} (L^2c^2-\alpha^2)^2} \left( (2L^2c^2 - \alpha^2)^2 - (L^2c^2-\alpha^2)^2 \right) \neq 0,
\end{equation*}
where the last inequality follows from the fact that $L^2 c^2 - \alpha^2 > 0$, by \eqref{analisiBDF}.

Second, we claim that $\omega' \notin 2\pi \mathbb{Z}/T$. Indeed, if $\omega' = 2\pi j/T $ for some $j\in\mathbb{Z}$ (of course, $j \geq 1$ since $\omega' > 0$), then by \eqref{eq:autovalori}
\begin{equation}\label{eq-Lj}
T = \dfrac{2\pi j L^{3}}{\alpha^{2}m}.
\end{equation}
Next, combining \eqref{eq-omega-T}, \eqref{formulaomegaR} and \eqref{formulaLR}, we deduce
\begin{equation}\label{eq-T-3.2}
T = \dfrac{2\pi k L^{2} \sqrt{L^{2}c^{2} - \alpha^{2}}}{\alpha^{2}mc} 
\end{equation}
and so, using \eqref{eq-Lj},
\begin{equation*}
j = \dfrac{k \sqrt{L^{2}c^{2} - \alpha^{2}}}{Lc}.
\end{equation*}
Hence, $j < k$, which is of course impossible if $k=1$. If instead $k \geq 2$, then $1 \leq j \leq k-1$ and a simple computation gives
\begin{equation}\label{eq-Lkj}
L = \dfrac{\alpha k}{ c \sqrt{k^{2} - j^{2}}}.
\end{equation}
Using \eqref{eq-Lkj} in \eqref{eq-Lj} we finally conclude that
\begin{equation*}
T = \frac{2\pi\alpha jk^{3}}{mc^{3}(k^{2}-j^{2})^{\!\frac{3}{2}}},
\end{equation*}
a contradiction with hypothesis \eqref{hp-Tkj}.

We are now in a position to conclude. 
Indeed, by the first claim, the matrix $\mathbb{A}$ can be written in real Jordan form as
\begin{equation*}
J_{\mathbb{A}} = 
\begin{pmatrix}
0 & 1 & 0 & 0 \vspace{1pt} \\
0 & 0 & 0 & 0 \vspace{1pt} \\
0 & 0 & 0 & \omega' \vspace{1pt} \\
0 & 0 & -\omega' & 0 
\end{pmatrix}
\end{equation*}
and, thus, the monodromy matrix $\mathbb{P} = e^{T\mathbb{A}}$ can be written in real Jordan form as
\begin{equation*}
J_{\mathbb{P}} = 
\begin{pmatrix}
1 & T & 0 & 0 \vspace{1pt} \\
0 & 1 & 0 & 0 \vspace{1pt} \\
0 & 0 & \cos(\omega' T) & \sin(\omega' T) \vspace{1pt} \\
0 & 0 & -\sin(\omega' T) & \cos(\omega' T)
\end{pmatrix}.
\end{equation*}
By the second claim, the $2 \times 2$ block associated with $\omega'$ does not produce the eigenvalue $1$. On the other hand, by the structure of the first $2 \times 2$ block, we see that $1$ is an eigenvalue of $\mathbb{P}$ with geometric multiplicity equal to one, as desired.

\begin{remark}\label{bif-rosette}
The values $T^*_{k,j}$ appearing in hypothesis \eqref{hp-Tkj} have a precise interpretation in terms of the topological structure of the set of \emph{all} $T$-periodic solutions of the unperturbed problem \eqref{eq:nonperturbato2}: when $k\geq 2$ problem \eqref{eq:nonperturbato2} may have also \emph{rosetta $T$-periodic solutions $x$ of type $(j,k)$} which means that, as $t$ ranges in $[0,T]$, $x(t)$ makes $k$ turns around the origin and $|x|$ has minimal period $T/j$, with $j\in\{1,\dots,k-1\}$.
A solution of type $(j,k)$ exists if and only if $T>T^*_{k,j}$ (see \cite[Proposition~4.3]{BDP1}).
If we use the period $T$ as a bifurcation parameter, we can show that the $T$-periodic solution of type $(j,k)$ bifurcates from the circular solution as $T\to (T^*_{k,j})^{+}$.
More precisely, the modulus $|x|$ of a non-circular $T$-periodic solution $x$ of type $(j,k)$ tends to the radius of the $T^{*}_{k,j}/k$-periodic circular solution as $T\to (T^*_{k,j})^{+}$.

Indeed, according to \cite[Propositions~2.1 and~2.2 and formula (2.26)]{BoDaFe-2021} we call $h=\mathcal{E}_{0}(x)$ the energy of $x$ and we have that the period of $|x|$ is
\begin{equation*}
T_{h}=\frac{T}{j}=\frac{2\pi\alpha m^{2}c^{3}}{(m^{2}c^{4}-h^{2})^{\!\frac{3}{2}}},
\end{equation*}
while the range of $|x|$ is $[r_{m},r_{M}]$ where
\begin{equation*}
r_{m}=\frac{\alpha h-\sqrt{\Delta}}{m^{2}c^{4}-h^{2}}, \quad
r_{M}=\frac{\alpha h+\sqrt{\Delta}}{m^{2}c^{4}-h^{2}}, \quad\text{with}\quad
\Delta=\alpha^{2}m^{2}c^{4}+\frac{\alpha^{2}k^{2}(h^{2}-m^{2}c^{4})}{k^{2}-j^{2}}.
\end{equation*}
As a consequence, we have that
\begin{equation*}
m^{2}c^{4}-h^{2}\to \biggl(\frac{2\pi\alpha m^{2}c^{3}j}{T^{*}_{k,j}}\biggr)^{\!\frac{2}{3}}=
\frac{m^{2}c^{4}(k^{2}-j^{2})}{k^{2}},
\quad \text{as $T\to (T^*_{k,j})^{+}$,}
\end{equation*}
and, thus, $\Delta\to 0$ and $r_{m}, r_{M}$ tends to the radius of the circular solution that turns around the origin $k$ times in $[0,T^{*}_{k,j}]$.
\hfill$\lhd$
\end{remark}

\subsection{Non-degeneracy in the $3d$-case}\label{section-3.3}

Here $\mathcal{M} = \mathcal{M}^3_{T/k}$ and, up to a rotation, we can assume that $z^* = (x^*,p^*) \in \mathcal{M}^3_{T/k}$ is the solution of 
\eqref{eq-hs0} with
\begin{equation*}
x^*(t) = R (\cos(\omega t),\sin (\omega t),0),
\end{equation*}
where $\omega$ is as in \eqref{eq-omega-T} and $R$ satisfies \eqref{eq-lem:circolare}.

According to Proposition~\ref{topologiaM}, $\mathrm{dim}(\mathcal{M}^3_{T/k})=3$ and, consequently, we need to verify that the space of $T$-periodic solutions of the system obtained by linearizing \eqref{eq-hs0} around $z^*$ is three-dimensional, as well. 

We perform the change of variables \eqref{eq-cambiovar4} introduced in Section~\ref{sec21} and accordingly we deal with \eqref{eq-hs3} in place of \eqref{eq-hs0}. In the new coordinates, the solution $z^*$ reads as
\begin{equation*}
\zeta^*(t) = \biggl{(}R,\omega t, \dfrac{\pi}{2},0,L,0 \biggr{)},
\end{equation*}
recalling that $(\varphi,f) \equiv (\pi/2,0)$ due to the equatorial dynamics of $z^{*}$.
As in Section~\ref{section-3.2}, thanks to the discussion in Appendix~\ref{section-appendix}, we are going to verify the non-degeneracy condition in the new coordinates.

Via standard computations one can find that the system obtained by linearizing \eqref{eq-hs3} at $\zeta^{*}$ is the autonomous system 
\begin{equation}\label{sistema-lineare2}
\dot w=\mathbb{A}w, \quad w\in\mathbb{R}^{6},
\end{equation}
where $\mathbb{A}$ is the $6\times6$ real matrix given by
\begin{equation}\label{defA3}
\mathbb{A} =
\begin{pmatrix}
0 & 0 & 0 & A & 0 & 0 \vspace{1pt} \\
B & 0 & 0 & 0 & C & 0 \vspace{1pt} \\
0 & 0 & 0 & 0 & 0 & E \vspace{1pt} \\
D & 0 & 0 & 0 & -B & 0 \vspace{1pt} \\
0 & 0 & 0 & 0 & 0 & 0 \vspace{1pt} \\
0 & 0 & F & 0 & 0 & 0
\end{pmatrix},
\end{equation}
where $A$, $B$, $C$, $D$ are defined as in \eqref{def-ABCD} and
\begin{equation*}
E = \dfrac{c}{R \sqrt{m^{2}c^{2} R^{2} + L^{2}}},
\qquad
F = \dfrac{-cL^{2}}{R \sqrt{m^{2}c^{2} R^{2} + L^{2}}}.
\end{equation*}
Next, exploiting \eqref{formulaLR}, we have
\begin{equation*}
E = \dfrac{\alpha^{2} m c }{L^{3} \sqrt{L^{2}c^{2} - \alpha^{2}}},
\qquad 
F = -\dfrac{\alpha^{2} m c }{L \sqrt{L^{2}c^{2} - \alpha^{2}}}.
\end{equation*}

Proceeding as in Section~\ref{section-3.2}, to verify condition $(iii)$ of Theorem~\ref{thastratto1} we need to show that $1$ is an eigenvalue of the monodromy matrix $\mathbb{P} = e^{T\mathbb{A}}$ with geometric multiplicity equal to three.
Since the characteristic polynomial of the matrix $\mathbb{A}$ is given by
\begin{equation*}
\lambda^2 (\lambda^2 - AD) (\lambda^{2}-EF)
\end{equation*}
the eigenvalues of $\mathbb{A}$ are $\{0, 0, \pm i\omega'', \pm i \omega '\}$, where 
\begin{equation}\label{eq:autovalori2}
\omega''=\sqrt{-EF} = \dfrac{\alpha^2m c}{L^2 \sqrt{L^{2}c^{2}-\alpha^{2}}}
\end{equation}
and $\omega'$ is as in \eqref{eq:autovalori}. 

Since $B^2 + CD\neq0$, it is easy to prove that $\mathrm{rank}(\mathbb{A})=5$ and so $\mathrm{nullity}(\mathbb{A})=1$, by the rank-nullity theorem. This fact proves that the geometric multiplicity of $0$ as eigenvalue of $\mathbb{A}$ is equal to one. As a consequence, the matrix $\mathbb{A}$ can be written in real Jordan form as
\begin{equation*}
J_{\mathbb{A}} = 
\begin{pmatrix}
0 & 1 & 0 & 0 & 0 & 0 \vspace{1pt} \\
0 & 0 & 0 & 0 & 0 & 0 \vspace{1pt} \\
0 & 0 & 0 & \omega'' & 0 & 0 \vspace{1pt} \\
0 & 0 & -\omega'' & 0 & 0 & 0 \vspace{1pt} \\
0 & 0 & 0 & 0 & 0 & \omega' \vspace{1pt} \\
0 & 0 & 0 & 0 & -\omega' & 0 
\end{pmatrix}.
\end{equation*}
Next, using \eqref{eq-T-3.2} and \eqref{eq-omega-T} we find that
\begin{equation*}
\omega'' = \dfrac{2\pi k}{T} = \omega
\end{equation*}
and, thus, the monodromy matrix $\mathbb{P} = e^{T\mathbb{A}}$ can be written in real Jordan form as
\begin{equation*}
J_{\mathbb{P}} = 
\begin{pmatrix}
1 & T & 0 & 0 & 0 & 0 \vspace{1pt} \\
0 & 1 & 0 & 0 & 0 & 0 \vspace{1pt} \\
0 & 0 & 1 & 0 & 0 & 0 \vspace{1pt} \\
0 & 0 & 0 & 1 & 0 & 0 \vspace{1pt} \\
0 & 0 & 0 & 0 & \cos(\omega' T) & \sin(\omega' T) \vspace{1pt} \\
0 & 0 & 0 & 0 & -\sin(\omega' T) & \cos(\omega' T)
\end{pmatrix}.
\end{equation*}
As in Section~\ref{section-3.2}, the $2 \times 2$ block associated with $\omega'$ does not produce the eigenvalue $1$. On the other hand, by the structure of the $4 \times 4$ block, we finally infer that $1$ is an eigenvalue of $\mathbb{P}$ with geometric multiplicity equal to three. The verification of the non-degeneracy condition is thus complete.

\subsection{A corollary: abundance of $T$-periodic solutions}\label{section-3.4}

We now prove the following corollary of Theorem~\ref{teo:Tfissato}, which has been already sketched in the Introduction. 

\begin{corollary}\label{cor-abundance}
Let $U \colon \mathbb{R}\times (\mathbb{R}^{d}\setminus\{0\}) \to \mathbb{R}$ be a
continuous function ($d=2,3$), $T$-periodic with respect to the first variable and two times continuously differentiable with respect to the second variable.
Then, for every integer $N \geq 1$ there exists $\varepsilon^*_N > 0$ such that, if $\vert \varepsilon \vert < \varepsilon^*_N$, problem \eqref{eq:Tfissato} has at least $4N$ $T$-periodic solutions.
\end{corollary}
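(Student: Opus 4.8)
The plan is to invoke Theorem~\ref{teo:Tfissato} for $N$ carefully chosen values of the integer $k$ and then to tell the resulting solutions apart by comparing the radii of the nearby unperturbed circular solutions. Since $T$ is now fixed, hypothesis \eqref{hp-Tkj} can no longer be arranged by moving $T$, so the first (and essentially the only nontrivial) step is to show that \eqref{hp-Tkj} fails for at most finitely many values of $k$.

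To this end, I would rewrite
\[
T^{*}_{k,j} = \frac{2\pi\alpha j}{mc^{3}}\Bigl(1-\frac{j^{2}}{k^{2}}\Bigr)^{-3/2},
\]
from which it is immediate that, for each fixed integer $j\ge 1$, the map $k\mapsto T^{*}_{k,j}$ is strictly decreasing on $\{k\in\mathbb{N}\colon k>j\}$ and satisfies $T^{*}_{k,j}>\tfrac{2\pi\alpha j}{mc^{3}}$, with $T^{*}_{k,j}\to\tfrac{2\pi\alpha j}{mc^{3}}$ as $k\to+\infty$. Hence the set
\[
B:=\bigl\{k\ge 2\colon T=T^{*}_{k,j}\ \text{for some}\ j\in\{1,\dots,k-1\}\bigr\}
\]
is finite: an equality $T=T^{*}_{k,j}$ forces $j<\tfrac{mc^{3}T}{2\pi\alpha}$, so only finitely many values of $j$ are possible, and by the strict monotonicity in $k$ each of them is attained for at most one $k$. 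Therefore $\mathbb{N}\setminus B$ is infinite (it contains $k=1$), and I would fix $N$ pairwise distinct integers $k_{1},\dots,k_{N}\in\mathbb{N}\setminus B$; for each of them Theorem~\ref{teo:Tfissato} applies with no further restriction.

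Next, let $R_{k}$ be the radius of the circular solution $x_{T/k}$ of minimal period $T/k$. By Lemma~\ref{lem:circolare} the period $T/k$ is strictly decreasing in $k$, the energy is strictly increasing in the period, and $R=\alpha h/(m^{2}c^{4}-h^{2})$ is strictly increasing in the energy $h$; hence $R_{k_{1}},\dots,R_{k_{N}}$ are pairwise distinct, and
\[
\eta:=\tfrac12\min_{1\le i<i'\le N}\bigl|R_{k_{i}}-R_{k_{i'}}\bigr|>0.
\]
Applying Theorem~\ref{teo:Tfissato} with this $\eta$ and with $k=k_{i}$ yields $\varepsilon^{*}(k_{i})>0$ and four $T$-periodic solutions of \eqref{eq:Tfissato} each of which stays within $\eta$ (pointwise, for all $t\in\mathbb{R}$) of a function of constant modulus $R_{k_{i}}$ — indeed the comparison functions $x_{T/k_{i}}(\pm t-\theta)$ and $M x_{T/k_{i}}(t)$, $M\in SO(3)$, all have modulus $R_{k_{i}}$ — so that $\bigl||x(t)|-R_{k_{i}}\bigr|<\eta$ for every $t\in\mathbb{R}$.

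Finally I would put $\varepsilon^{*}_{N}:=\min_{1\le i\le N}\varepsilon^{*}(k_{i})>0$ and verify that for $|\varepsilon|<\varepsilon^{*}_{N}$ the $4N$ solutions so obtained are pairwise distinct. Within a fixed index $k_{i}$ the four solutions are distinct by Theorem~\ref{teo:Tfissato} (in the planar case, two of them stay close to $\mathcal{M}^{2}_{T/k_{i},+}$ and two to $\mathcal{M}^{2}_{T/k_{i},-}$, which are disjoint). If $x$ came from index $k_{i}$ and $y$ from index $k_{i'}$ with $i\ne i'$ and $x=y$, then $\bigl||x(t)|-R_{k_{i}}\bigr|<\eta$ and $\bigl||x(t)|-R_{k_{i'}}\bigr|<\eta$ for all $t$, whence $|R_{k_{i}}-R_{k_{i'}}|<2\eta$, contradicting the choice of $\eta$. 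This produces at least $4N$ distinct $T$-periodic solutions of \eqref{eq:Tfissato}, as claimed. I expect the finiteness of $B$ to be the only genuine obstacle; the remaining steps are routine bookkeeping.
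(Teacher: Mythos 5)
Your proof is correct, and its overall scheme --- apply Theorem~\ref{teo:Tfissato} for $N$ admissible values of $k$ and separate the resulting solutions through the circular orbits they approximate --- is the same as the paper's; the difference lies in how the key claim about hypothesis \eqref{hp-Tkj} is established. The paper only proves that the set of admissible $k$ is infinite, and does so by contradiction: if $T=T^{*}_{k,j_{k}}$ for all large $k$, the bounded integer sequence $j_{k}$ would have to converge to $\gamma T$ (with $\gamma = mc^{3}/(2\pi\alpha)$), hence be equal to $\gamma T$ eventually, contradicting the strict inequality $j_{k}<\gamma T$. You prove the stronger statement that the exceptional set $B$ is finite, and you do it directly: rewriting $T^{*}_{k,j}=\frac{2\pi\alpha j}{mc^{3}}\bigl(1-j^{2}/k^{2}\bigr)^{-3/2}$ shows that for each fixed $j$ the map $k\mapsto T^{*}_{k,j}$ is strictly decreasing and bounded below by $2\pi\alpha j/(mc^{3})$, so an equality $T=T^{*}_{k,j}$ forces $j<\gamma T$ and, for each such $j$, admits at most one $k$. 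This is cleaner and buys a little more (any $N$ integers outside a fixed finite set will do, rather than a sequence produced by the contradiction argument). You also make fully explicit the pairwise distinctness of the $4N$ solutions, via the choice $\eta=\tfrac12\min_{i\neq i'}|R_{k_{i}}-R_{k_{i'}}|$ together with the strict monotonicity of the radius as a function of the period coming from Lemma~\ref{lem:circolare}, whereas the paper condenses this into ``all distinct provided $\eta$ is chosen sufficiently small''; your within-index distinctness is exactly what Theorem~\ref{teo:Tfissato} asserts, so citing it there is legitimate. Both routes are valid; yours trades the paper's limit argument for an elementary monotonicity observation and a quantitative separation of radii.
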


\begin{proof}
Let us claim that there exists a strictly increasing sequence of integers $\{k_l\}_{l \geq 1}$ such that $k_1 =1$ and, 
for every $l\geq 2$, 
\begin{equation*}
T \neq T^*_{k_l,j}, \quad \text{for every $j=1,\ldots,k_l-1$.}
\end{equation*}
Assuming that this is true, given any integer $N \geq 1$, Theorem~\ref{teo:Tfissato} can be applied for $k = k_l$ with $l=1,\ldots,N$ and, thus, for every $\eta > 0$ we
find $\varepsilon^*_{k_l} > 0$ such that, if 
$\vert \varepsilon \vert < \varepsilon^*_{k_l}$, problem 
\eqref{eq:Tfissato} admits four $T$-periodic solutions which are $\eta$-close to the manifold of circular solutions of minimal period $T/{k_l}$. Thus, if 
\begin{equation*}
\vert \varepsilon \vert < \varepsilon^*_N := \min_{l=1,\ldots,N} \varepsilon^*_{k_l}
\end{equation*}
problem \eqref{eq:Tfissato} has at least $4N$ $T$-periodic solutions, which are all distinct provided $\eta$ is chosen sufficiently small. 

It thus remains to prove the claim. In order to do this, let us assume by contradiction that there exists an integer $\bar{k}$ such that, for every $k > \bar{k}$, it holds that
\begin{equation*}
T = T^*_{k,j_k},
\end{equation*}
for some $j_k \in \{1,\ldots,k-1\}$, that is
\begin{equation}\label{jkappa}
j_k = \gamma T \frac{(k^{2}-j_k^{2})^{\frac{3}{2}}}{k^3},
\end{equation}
where we have set for simplicity $\gamma = mc^3/(2\pi\alpha)$.
So, we have
\begin{equation}\label{eq-ineq_jk}
j_k < \gamma T,
\end{equation}
implying that the sequence $\{j_k\}_{k > \bar{k}}$ is bounded. Hence, passing to the limit in \eqref{jkappa} yields
$j_k \to \gamma T$ for $k \to +\infty$ so that, since $j_k$ is an integer,
$j_k = \gamma T$ for every $k$ large enough. Therefore, a contradiction with \eqref{eq-ineq_jk} is obtained. The claim is proved.
\end{proof}

\section{The fixed-energy problem}\label{section-4}

In this section, we provide the proof of Theorem~\ref{teo:Efissata}, dealing with the fixed-energy problem
associated with the system
\begin{equation}\label{eq:Efissata-sec}
\dfrac{\mathrm{d}}{\mathrm{d}t}\left(\frac{m\dot{x}}{\sqrt{1-|\dot{x}|^{2}/c^{2}}}\right) =
-\alpha\frac{x}{|x|^{3}} + \varepsilon \, \nabla U(x),
\qquad x \in \mathbb{R}^d\setminus\{0\},
\end{equation}
where $U \colon \mathbb{R}^d \setminus \{0\} \to \mathbb{R}$ is a potential of class $\mathcal{C}^\infty$. Let us recall that the energy of a solution of 
\eqref{eq:Efissata-sec} is defined as
\begin{equation}\label{eq:energia2}
\mathcal{E}_\varepsilon(x):= \frac{mc^{2}}{\sqrt{1-|\dot{x}^{2}|/c^{2}}} -\frac{\alpha}{|x|} - \varepsilon \, U(x).
\end{equation}

More precisely, in Section~\ref{section-4.1} we recall the bifurcation result that we are going to use in this fixed-energy context and we describe the general strategy of the proof. As in the case of the fixed-period problem, a non-degeneracy condition is the main issue for the application of the abstract result: the proof that this condition is satisfied is given in Section~\ref{section-4.2} for $d=3$, and in Section~\ref{section-4.3} for $d=2$.

\subsection{The abstract bifurcation result and the strategy of the proof}\label{section-4.1}

The proof of Theorem~\ref{teo:Efissata} relies on an abstract bifurcation theory 
developed by Weinstein in a series of papers \cite{We-73, We-77, We-78}, dealing with fixed-energy solutions of the perturbed Hamiltonian system 
\begin{equation*}
\dot z = X_{\mathcal{H}_\varepsilon}(z)
\end{equation*}
on a (symplectic) manifold $\mathfrak{M}$ (in the above formula, according to the usual notation,  $X_{\mathcal{H}_\varepsilon}$ is the Hamiltonian vector field associated to the Hamiltonian  $\mathcal{H}_{\varepsilon}$, that is, $\omega(X_{\mathcal{H}_{\varepsilon}},Y) = \mathrm{d}\mathcal{H}_{\varepsilon}(Y)$ for every vector field $Y$ on $\mathfrak{M}$, where $\omega$ is the symplectic form on $\mathfrak{M}$). More precisely, we are going to make use of the following result, which corresponds to \cite[Theorem~1.4]{We-73} in the case of the action of the group $\mathbb{Z}_{n}$ for $n=1$.

\begin{theorem}\label{teo-weinstein0}
Let $(\mathfrak{M},\omega)$ be a symplectic manifold such that the form $\omega$ is exact. Let $\mathcal{H}_\varepsilon$ be a family of $\mathcal{C}^\infty$-functions on $\mathfrak{M}$ depending smoothly on $\varepsilon$ and let $h \in \mathbb{R}$.
Moreover, let $\Sigma$ be a closed submanifold of $\mathfrak{M} \times (0,+\infty)$ such that: 
\begin{itemize}
\item[$(i)$] for every $(z,\tau) \in \Sigma$ it holds that
\begin{equation*}
\varphi^{\tau}(z) = z, \qquad \mathcal{H}_{0}(z) = h, \qquad X_{\mathcal{H}_{0}}(z) \neq 0,
\end{equation*}
where $t \mapsto\varphi^t(\xi)$ denotes the solution of the Hamiltonian system $\dot z =  X_{\mathcal{H}_0}(z)$ with $\varphi^0(\xi) = \xi$;

\item[$(ii)$] the restriction to $\Sigma$ of the projection $\pi \colon \mathfrak{M} \times \mathbb{R} \to \mathfrak{M}$ is an embedding;

\item[$(iii)$] for every $(z,\tau) \in \Sigma$ and for every $v \in T_z (\mathcal{H}_{0}^{-1}(h))$ it holds that
\begin{equation*}
v \in T_z (\pi(\Sigma)) \quad \Longleftrightarrow \quad v = P v + \lambda X_{\mathcal{H}_{0}}(z), \quad \text{ for some } \lambda \in \mathbb{R},
\end{equation*}
where $P \colon T_z \mathfrak{M} \to T_z \mathfrak{M}$ is the monodromy operator at $z$ (i.e., $P = \mathrm{d}_\xi \varphi^\tau(\xi)|_{ \xi = z}$).
\end{itemize}
Then, denoting by $m$ the least integer greater than or equal to $\mathrm{cat}(\Sigma)/2$, for every neighborhood $\mathcal{U} \subset \mathfrak{M} \times (0,+\infty)$ of $\Sigma$, there exists $\varepsilon^{*} > 0$ such that,
if $\vert \varepsilon \vert < \varepsilon^{*}$, there exist $\sigma_{1},\ldots,\sigma_m > 0$ and there exist 
$z_{1},\ldots,z_m$ geometrically distinct periodic solutions of the fixed-energy problem
\begin{equation*}
\begin{cases}
\, \dot z = X_{\mathcal{H}_\varepsilon}(z), \\
\, \mathcal{H}_\varepsilon(z) = h,
\end{cases}
\end{equation*}
such that, for every $i=1,\ldots,m$, $\sigma_i$ is a period for $z_i$ and 
\begin{equation}\label{loc-theorem}
\bigl\{(z_i(t),\sigma_i) \colon t \in \mathbb{R} \bigr\} \subset \mathcal{U}.
\end{equation}
\end{theorem}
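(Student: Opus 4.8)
The statement is Weinstein's fixed-energy bifurcation theorem, and my plan is to recover it by a variational reduction, following the scheme of \cite{We-73, We-78}. The starting point is that, since $\omega = \mathrm{d}\lambda$ is exact, looking for $\tau$-periodic solutions of $\dot z = X_{\mathcal{H}_\varepsilon}(z)$ lying on the energy level $\mathcal{H}_\varepsilon = h$ can be recast as a critical point problem for the \emph{free-period action functional}
\begin{equation*}
\mathcal{A}_\varepsilon(\gamma,\tau) = \int_0^1 \gamma^*\lambda - \tau \int_0^1 \bigl(\mathcal{H}_\varepsilon(\gamma(t)) - h\bigr)\,\mathrm{d}t,
\end{equation*}
defined for loops $\gamma \colon \mathbb{R}/\mathbb{Z} \to \mathfrak{M}$ and periods $\tau > 0$ (rigorously, on a suitable Hilbert manifold of loops with values in a tubular neighbourhood of $\pi(\Sigma)$, after the same kind of cut-off already used in Section~\ref{section-3.1} to deal with a non-globally-defined Hamiltonian). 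Indeed, the Euler--Lagrange equations of $\mathcal{A}_\varepsilon$ read $\dot\gamma = \tau X_{\mathcal{H}_\varepsilon}(\gamma)$ together with $\int_0^1(\mathcal{H}_\varepsilon(\gamma)-h)\,\mathrm{d}t = 0$; since $\mathcal{H}_\varepsilon$ is constant along its own flow, the latter forces $\mathcal{H}_\varepsilon(\gamma)\equiv h$, so that $t\mapsto\gamma(t/\tau)$ is a $\tau$-periodic solution of the fixed-energy problem, and it is non-constant wherever $X_{\mathcal{H}_\varepsilon}$ does not vanish along it.

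First I would use hypotheses $(i)$ and $(ii)$ to describe the unperturbed critical set. By $(i)$, each $(z,\tau)\in\Sigma$ yields, after rescaling to $[0,1]$, a non-constant loop $\gamma_{z,\tau}$ on which $\mathcal{A}_0$ is constant and $\mathrm{d}\mathcal{A}_0$ vanishes; the map $(z,\tau)\mapsto\gamma_{z,\tau}$ is injective precisely because $X_{\mathcal{H}_0}\neq0$ on $\Sigma$, so $\Sigma$ embeds as a compact critical manifold $\widehat\Sigma$ of $\mathcal{A}_0$, invariant under the $S^1$-action by time translation, on which this action is moreover \emph{free}; condition $(ii)$ is what guarantees that this $\widehat\Sigma$ is exactly the critical set one must keep track of (no collisions in the $z$-component, so $\pi(\Sigma)$ and $\widehat\Sigma/S^1$ are genuine submanifolds). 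Then I would prove the crucial translation: hypothesis $(iii)$ is exactly the non-degeneracy of $\widehat\Sigma$ as a critical manifold, i.e. $\ker\mathrm{d}^2\mathcal{A}_0 = T\widehat\Sigma$ along $\widehat\Sigma$. The point is that $\ker\mathrm{d}^2\mathcal{A}_0$ at a loop over $(z,\tau)$ consists of pairs $(\xi,s)$ with $\xi$ a periodic solution of the linearized Hamiltonian equation (the inhomogeneity governed by $s$ being the variation of $\tau$) subject to the linearized energy constraint; using that the linearized energy $\omega(X_{\mathcal{H}_0},\xi)$ is a first integral of the linearized flow, this is equivalent to $v:=\xi(0)$ being a vector in $T_z(\mathcal{H}_0^{-1}(h))$ with $v = Pv + \lambda X_{\mathcal{H}_0}(z)$ for some $\lambda$, i.e. precisely the left/right membership in $(iii)$ after discounting the two always-present kernel directions $X_{\mathcal{H}_0}(z)$ (time translation) and the $\tau$-variation. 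Hence $(iii)$ says exactly that every such $v$ is tangent to $\pi(\Sigma)$.

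With non-degeneracy established, the remaining steps are standard. Since $\mathrm{d}^2\mathcal{A}_0$ is a self-adjoint Fredholm operator whose kernel equals $T\widehat\Sigma$, a Lyapunov--Schmidt reduction in a tubular neighbourhood of $\widehat\Sigma$ produces, for $\varepsilon$ small, an $S^1$-invariant $\mathcal{C}^1$ function $g_\varepsilon$ on a neighbourhood of $\widehat\Sigma$ in a finite-dimensional manifold, whose critical points correspond to the sought periodic solutions, with $g_0$ having $\widehat\Sigma$ as a non-degenerate critical manifold; a Lusternik--Schnirelmann argument for $g_\varepsilon$ (equivariant Marino--Prodi type perturbation near a non-degenerate critical manifold) then yields at least $\mathrm{cat}(\widehat\Sigma/S^1)$ distinct critical $S^1$-orbits. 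Since the action is free, the fibration $S^1\hookrightarrow\widehat\Sigma\to\widehat\Sigma/S^1$ and a standard category inequality give $\mathrm{cat}(\widehat\Sigma)\le 2\,\mathrm{cat}(\widehat\Sigma/S^1)$, so one obtains at least $m=\lceil\mathrm{cat}(\Sigma)/2\rceil$ geometrically distinct perturbed periodic solutions, located in $\mathcal{U}$ and with periods $\sigma_i$ close to the unperturbed ones. The step I expect to be the main obstacle is precisely the identification of $\ker\mathrm{d}^2\mathcal{A}_0$ with the monodromy condition $(iii)$ — the careful bookkeeping of the two trivial kernel directions and the exact role of the energy constraint — together with making the Lyapunov--Schmidt reduction rigorous for the strongly indefinite functional $\mathcal{A}_0$, difficulties that appear already, in a milder form, in the fixed-period setting behind Theorem~\ref{thastratto1}.
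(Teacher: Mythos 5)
You should first be aware that the paper does not prove Theorem~\ref{teo-weinstein0} at all: it is quoted (as the case $n=1$ of a $\mathbb{Z}_n$-action) from Weinstein \cite{We-73}, so there is no internal argument to compare yours against; what you have attempted is a reconstruction of Weinstein's own proof. Your architecture is indeed his: a free-period action functional in which the period plays the role of a Lagrange multiplier for the energy constraint, identification of $\Sigma$ with a critical manifold $\widehat\Sigma$, translation of hypothesis $(iii)$ into $\ker \mathrm{d}^2\mathcal{A}_0 = T\widehat\Sigma$, a Lyapunov--Schmidt reduction, and an equivariant Lusternik--Schnirelman count producing the bound $\mathrm{cat}(\Sigma)/2$. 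The kernel identification you single out as a worry is actually the solid part: solving the linearized equation $\dot\xi = \tau\, DX_{\mathcal{H}_0}(\gamma)\xi + s\, X_{\mathcal{H}_0}(\gamma)$ by variation of constants (using that $X_{\mathcal{H}_0}(\gamma)$ solves the homogeneous equation) gives $\xi(1) = P\xi(0) + s\, X_{\mathcal{H}_0}(z)$, while the averaged linearized energy constraint reduces, by conservation of the linearized energy, to $\xi(0) \in T_z(\mathcal{H}_0^{-1}(h))$; hence the kernel is isomorphic to the space $\mathcal{F}$ discussed after the statement, and $(iii)$ is precisely $\mathcal{F} = T_z\pi(\Sigma)$, as you claim.

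Two steps, however, are genuine gaps rather than routine verifications. First, the functional setting: the free-period action on loops in a general exact symplectic manifold is strongly indefinite, and the $H^{1/2}$-plus-cut-off scheme of Section~\ref{section-3.1} does not transfer -- $H^{1/2}$ maps are not continuous, so ``loops in $\mathfrak{M}$'' are not even well defined in that space, and the quadratic form $\mathcal{Q}$ has no analogue for a general primitive $\lambda$ of $\omega$; on the other hand, in a space where loops on a manifold make sense (e.g.\ $H^1$ or $\mathcal{C}^1$) the second variation is not Fredholm, exactly the difficulty recorded in Remark~\ref{lag-ham}. Making the reduction rigorous in this setting is the actual content of Weinstein's papers \cite{We-73,We-78}, and your proposal leaves it as an acknowledged obstacle rather than resolving it. Second, the $S^1$-action on $\widehat\Sigma$ by time translation is free only if every $\tau$ occurring in $\Sigma$ is the minimal period of the corresponding orbit, which is not among the hypotheses (the paper explicitly notes that $\tau$ need not be minimal); with finite isotropy the quotient is an orbifold, and both the step ``number of critical orbits $\geq$ category of the quotient'' and the fibration inequality $\mathrm{cat}(\widehat\Sigma) \le 2\,\mathrm{cat}(\widehat\Sigma/S^1)$ require the $\mathbb{Z}_n$-equivariant refinement -- which is precisely why Weinstein formulates his theorem with a $\mathbb{Z}_n$-action. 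So your proposal is an accurate roadmap of the known proof, but as written it is not a complete one.
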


Some remarks about assumptions and conclusions of Theorem~\ref{teo-weinstein0} are in order. 
As for the assumptions, we first notice that condition $(i)$ requires that, for every $(z,\tau) \in \Sigma$, the path 
$t \mapsto \varphi^t(z)$ is a non-constant $\tau$-periodic solution of the unperturbed Hamiltonian system $\dot z =  X_{\mathcal{H}_0}(z)$ with energy equal to $h$ (that is, $\Sigma$ is a so-called periodic manifold). Incidentally, notice that $\tau$ does not need to be the minimal period of this solution; however, condition $(ii)$ ensures that the choice of the period $\tau$ is unique and smooth with respect to the initial condition $z$. Finally, condition $(iii)$ is a non-degeneracy condition for the periodic manifold $\Sigma$. 
It is worth noticing that this condition is different from the one considered in the fixed-period case, which was concerned with the dimension of the space of fixed points of the monodromy operator $P$ (compare with the proofs in Section~\ref{section-3.2} and Section~\ref{section-3.3}). 
Actually, the notion of non-degeneracy for the fixed-energy problem is a quite delicate issue, and several non-equivalent definitions have been considered in the literature (see \cite[Remark~2.1]{BDF4} for more comments).
With the aim of clarifying condition $(iii)$, let us first observe
that both $X_{\mathcal{H}_{0}}(z)$ and $T_z (\mathcal{H}_{0}^{-1}(h))$ are invariant for $P$ and, moreover, 
$X_{\mathcal{H}_{0}}(z) \in T_z (\mathcal{H}_{0}^{-1}(h))$; so, condition $(iii)$ actually deals with the restricted monodromy
$P \colon T_z (\mathcal{H}_{0}^{-1}(h)) \to T_z (\mathcal{H}_{0}^{-1}(h))$. Second (see again \cite[Remark~2.1]{BDF4}), we point out that it is possible to show that any vector $v \in T_z (\pi(\Sigma)) \subset T_z (\mathcal{H}_{0}^{-1}(h))$ is of the form $P\eta + \lambda X_{\mathcal{H}_{0}}(z)$ for some $\lambda \in \mathbb{R}$; thus, the non-degeneracy condition amounts in requiring that the space
\begin{equation*}
\mathcal{F} :=  \bigl{\{} v \in T_z (\mathcal{H}_{0}^{-1}(h)) \colon v = P v + \lambda X_{\mathcal{H}_{0}}(z), \text{ for some } \lambda \in \mathbb{R} \bigr{\}},
\end{equation*}
is contained in $T_z (\pi(\Sigma))$, and thus, that the dimension of $\mathcal{F}$ is equal to the dimension of 
$\pi(\Sigma)$, that is, by condition $(ii)$, the dimension of $\Sigma$.

Concerning the thesis of Theorem~\ref{teo-weinstein0}, let us observe that 
\eqref{loc-theorem} provides a localization information for both the orbit $\{z_i(t)\}_{t \in \mathbb{R}}$ on $\mathfrak{M}$ and the period $\sigma_i$, which can thus be chosen arbitrarily near orbits and periods of the solutions of the unperturbed problem.

In order to prove Theorem~\ref{teo:Efissata}, we are going to apply the abstract Theorem~\ref{teo-weinstein0} to equation \eqref{eq:Efissata-sec}, written in the equivalent Hamiltonian form \eqref{eq-hs00}, where the Hamiltonian $\mathcal{H}_\varepsilon$ is given by \eqref{eq-hamiltoniana} (of course, now the Hamiltonian $\mathcal{H}_\varepsilon$ does not depend on $t$, since the potential $U$ is autonomous). 
The symplectic manifold is thus $\mathfrak{M} =( \mathbb{R}^d \setminus \{0\}) \times \mathbb{R}^d$, endowed with the usual (exact) symplectic form $\omega = \mathrm{d}x \wedge \mathrm{d}p$. Of course, as already observed in Section~\ref{section-2}, the energy $\mathcal{E}_{\varepsilon}$ defined in \eqref{eq:energia2} of a solution $x$ of  \eqref{eq:Efissata-sec} is nothing but the Hamiltonian $\mathcal{H}_\varepsilon$ of the corresponding solution $(x,p)$ of \eqref{eq-hs00}.

To define the periodic manifold $\Sigma \subset \mathfrak{M} \times (0,+\infty)$, we first recall that, given any value $h \in (0,mc^2)$ of the energy, Lemma~\ref{lem:circolare} provides an associated period $\tau = \tau(h)$; accordingly, recalling also Proposition~\ref{topologiaM}, we consider the manifold of circular solutions
\begin{equation*}
\mathcal{M} =
\begin{cases}
\, \mathcal{M}^2_{\tau,+}, &\text{if $d=2$,} \\
\, \mathcal{M}^3_{\tau}, &\text{if $d=3$,}
\end{cases}
\end{equation*}
where, with a slight abuse of notation, throughout the section the above objects are meant as sets of initial conditions
$z(0) = (x(0),p(0)) \in \mathfrak{M}$ giving rise to the associated circular solution (incidentally, let us notice that, differently from Section~\ref{section-3}, here the chosen period $\tau$ is always the minimal one). Finally, let us set
\begin{equation*}
\Sigma = \mathcal{M} \times \{ \tau \} \subset \mathfrak{M} \times (0,+\infty).
\end{equation*}
Notice that $\mathrm{cat}(\Sigma) = \mathrm{cat}(\mathcal{M})$ and, thus, by Remark~\ref{rem-cat} the least integer $m$
greater or equal to $\mathrm{cat}(\Sigma)/2$ is $m=1$ for $d=2$, and $m=2$ for $d=3$. Thus, in the case $d=3$ 
Theorem~\ref{teo-weinstein0} would provide two geometrically distinct periodic solutions $z_1 = (x_1,p_1), z_2 = (x_2,p_2)$ of \eqref{eq-hs00}. However, since the second order problem \eqref{eq:Efissata-sec} is invariant for time-inversion, we cannot exclude that
trivially $z_2(t) = (x_1(-t),-p_1(-t))$. Therefore, in our case Theorem~\ref{teo-weinstein0} will grant the existence of just one periodic solution both for $d=2$ and $d=3$.
 
Clearly, conditions $(i)$ and $(ii)$ of Theorem~\ref{teo-weinstein0} hold true. Therefore, in order to apply the theorem it remains to check that the non-degeneracy condition $(iii)$ is satisfied: this verification, which is of course the crucial one, will be provided in the next two sections, dealing at first with the case $d=3$ and then with the case $d=2$ (where the argument is simpler, and a completely different strategy of proof is possible, see Remark~\ref{rem-continuazione}). Assuming by now that all the assumptions of Theorem~\ref{teo-weinstein0} are satisfied, let us show how to conclude. 

At first, we establish the following continuous dependence result.

\begin{lemma}\label{lem-contdep}
For every $\eta > 0$ there exist $\delta > 0$ and $\bar\varepsilon > 0$ such that, 
if $\vert\varepsilon \vert < \bar\varepsilon$, for every $z^*_0 \in \mathcal{M}$ and for every solution
$z$ of \eqref{eq:Efissata-sec}, it holds that
\begin{equation*}
\vert z(0) - z^*_0 \vert < \delta \; \Rightarrow \; \vert z(t) - z^*(t) \vert < \eta,  \quad \text{for every $t\in[0,\tau+\eta]$,}
\end{equation*}
where $z^*$ is the solution of the unperturbed problem \eqref{eq-hs0} satisfying $z^*(0) = z^*_0$.
\end{lemma}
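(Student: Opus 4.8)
The plan is to prove Lemma~\ref{lem-contdep} by a Gronwall estimate combined with a continuation (bootstrap) argument that confines the perturbed trajectory to a fixed compact region where all the vector fields in play are uniformly Lipschitz, the compactness of $\mathcal{M}$ being what makes all constants uniform with respect to $z^*_0$.

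First I would fix, once and for all, a compact subset $K \subset \mathfrak{M}$ surrounding the unperturbed circular orbits with a safety margin. By Lemma~\ref{lem:circolare} and Proposition~\ref{topologiaM}, each $z^*_0 \in \mathcal{M}$ generates a $\tau$-periodic solution $z^*(t) = \varphi^t(z^*_0)$ with $|x^*(t)| \equiv R$ and $|p^*(t)|$ also constant; hence the set $\mathcal{C} := \{\varphi^t(z^*_0) \colon (z^*_0,t) \in \mathcal{M} \times [0,\tau]\}$, being the continuous image of a compact set, is compact, and it is bounded away from the collision set $\{x=0\}$ since $|x|\equiv R$ on it. I then set $\rho := R/2$ and take $K$ to be the closed $\rho$-neighborhood of $\mathcal{C}$, which is still a compact subset of $\mathfrak{M}$. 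On $K$ the Hamiltonian $\mathcal{H}_\varepsilon$ from \eqref{eq-hamiltoniana} is of class $\mathcal{C}^\infty$ for all $|\varepsilon| \le 1$, so there is a constant $L>0$ which is a Lipschitz constant for $X_{\mathcal{H}_\varepsilon}|_K$ uniformly in $|\varepsilon|\le 1$, and, since $X_{\mathcal{H}_\varepsilon} - X_{\mathcal{H}_0} = (0, \varepsilon \nabla U(x))$, there is $M>0$ with $\sup_K |X_{\mathcal{H}_\varepsilon} - X_{\mathcal{H}_0}| \le M|\varepsilon|$.

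Given $\eta>0$, I would set $\eta' := \min\{\eta,\rho\}$ and, for a solution $z$ of \eqref{eq:Efissata-sec} with $|z(0)-z^*_0| < \delta$ (with $\delta < \eta'$ still to be fixed), introduce $t^* := \sup\{t \in [0,\tau+\eta] \colon z \text{ is defined on } [0,t] \text{ and } |z(s)-z^*(s)| < \eta' \text{ for all } s \in [0,t]\}$, which is positive by continuity. For $s \in [0,t^*)$ one has $z^*(s) \in \mathcal{C}$ and $|z(s)-z^*(s)| < \eta' \le \rho$, hence $z(s) \in K$; writing $z(t)-z^*(t) = z(0)-z^*_0 + \int_0^t \big( X_{\mathcal{H}_\varepsilon}(z(s)) - X_{\mathcal{H}_\varepsilon}(z^*(s)) \big)\,\mathrm{d}s + \int_0^t \big( X_{\mathcal{H}_\varepsilon}(z^*(s)) - X_{\mathcal{H}_0}(z^*(s)) \big)\,\mathrm{d}s$ and applying Gronwall's inequality on $[0,t^*) \subseteq [0,\tau+\eta]$ gives $|z(t)-z^*(t)| \le \big( \delta + M|\varepsilon|(\tau+\eta) \big)\, e^{L(\tau+\eta)}$ there. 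I would then choose $\delta$ and $\bar\varepsilon$ — depending only on $\eta,\tau,L,M$, hence uniformly in $z^*_0$ — small enough that this right-hand side is $< \eta'$ whenever $|\varepsilon| < \bar\varepsilon$.

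It remains to close the bootstrap, and this is the only genuinely delicate point: a priori the perturbed solution $z$ need not exist up to time $\tau+\eta$ (nor stay away from $\{x=0\}$), and it is exactly the confinement to $K$ that rules this out. Indeed, since $z$ stays in the compact set $K$ on $[0,t^*)$, the standard continuation criterion shows that $z$ extends to a neighborhood of $t^*$ with $z(t^*) \in K$; passing to the limit in the Gronwall bound gives the strict inequality $|z(t^*)-z^*(t^*)| < \eta'$, so $t^* < \tau+\eta$ would contradict the definition of $t^*$ (it could be increased). Hence $t^* = \tau+\eta$, $z$ is defined on all of $[0,\tau+\eta]$, and $|z(t)-z^*(t)| < \eta' \le \eta$ throughout, which is the assertion. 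Everything beyond this confinement step is routine continuous dependence on initial data and parameter, made uniform through the compactness of $\mathcal{M}$.
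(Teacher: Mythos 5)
Your proof is correct, but it follows a genuinely different route from the paper's. The paper argues by contradiction and softness: assuming the conclusion fails along sequences $\delta_n \to 0$, $\varepsilon_n \to 0$, it uses the compactness of $\mathcal{M}$ to extract a convergent subsequence of initial conditions and then invokes continuous dependence (on initial data and on the parameter $\varepsilon$) over the compact time interval $[0,\tau+\hat\eta]$ to force both the perturbed and unperturbed solutions onto the same limit, contradicting the assumed lower bound $\hat\eta$. You instead give a direct quantitative argument: confine the perturbed trajectory to a fixed compact tube $K$ around the (compact) union of circular orbits, use a uniform Lipschitz constant there together with the bound $\sup_K \vert X_{\mathcal{H}_\varepsilon}-X_{\mathcal{H}_0}\vert \le M\vert\varepsilon\vert$, and close a Gronwall estimate with a bootstrap/continuation step. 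What your approach buys is explicitness and self-containment: $\delta$ and $\bar\varepsilon$ come out in terms of $L$, $M$, $\tau$, $\eta$, and the issues of global existence on $[0,\tau+\eta]$ and of staying away from the collision set $\{x=0\}$ -- which the paper's proof silently subsumes under ``continuous dependence'' -- are handled explicitly through the confinement to $K$. What the paper's argument buys is brevity: compactness of $\mathcal{M}$ plus the standard continuous dependence theorem do all the uniformization work without any constants. One minor point in your write-up: you should record that $\bar\varepsilon \le 1$ (or whatever bound you fixed when choosing the uniform Lipschitz constant on $K$), so that the constants $L$ and $M$ indeed apply for all $\vert\varepsilon\vert<\bar\varepsilon$; this is implicit but worth stating.
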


\begin{proof}
By contradiction, let us suppose that there exists $\hat\eta > 0$ such that, if $\{\delta_n\}_n$ is an arbitrary sequence of positive real numbers with $\delta_n \to 0^+$, there exist $\{\varepsilon_n\}_n$ with $\varepsilon_n \to 0$, $\{z_n\}_n$ solutions of \eqref{eq:Efissata-sec} (with $\varepsilon = \varepsilon_n$) and $\{(z^*_0)_n\}_n \subset \mathcal{M}$ such that
\begin{equation}\label{eq-assurdo}
\vert z_n(0) - (z^*_0)_n \vert < \delta_n \quad \text{ and } \quad \vert z_n(t_n) - z^*_n(t_n) \vert \geq \hat\eta,
\end{equation}
for some $t_n \in [0,\tau+\hat\eta]$, where $z^*_n$ is the solution of the unperturbed problem \eqref{eq-hs0} satisfying $z^*_n(0) = (z^*_0)_n$. By the compactness of the manifold $\mathcal{M}$, we can assume that 
$(z^*_0)_n \to \hat{z}_0$. Hence $z_n(0) \to \hat{z}_0$ as well, and thus, by continuous dependence,
\begin{equation*}
z_n(t) \to \hat{z}(t) \quad \text{ and } \quad z^*_n(t) \to \hat{z}(t), \quad \text{ uniformly on $[0,\tau+\hat\eta]$,}
\end{equation*}
where $\hat{z}$ is the solution of \eqref{eq-hs0} satisfying $\hat z(0) = \hat{z}_0$.
Therefore, the second inequality in \eqref{eq-assurdo} cannot be true.
\end{proof}

Let us fix an arbitrary number $\eta > 0$ and, accordingly, consider
$\delta = \delta(\eta)$ and $\bar\varepsilon= \bar\varepsilon(\eta)$ as given by Lemma~\ref{lem-contdep}.
Assuming without loss of generality that $\delta < \tau$, let us define the set
\begin{equation*}
\mathcal{U}_\delta = \bigl\{ z=(x,p) \in \mathfrak{M} \colon \mathrm{dist}(z,\mathcal{M}) < \delta \bigr\} \times
(\tau-\delta,\tau+\delta),
\end{equation*}
which is of course a neighborhood of $\Sigma$ in $\mathfrak{M} \times (0,+\infty)$.
Then, Theorem~\ref{teo-weinstein0} gives a number $\varepsilon^* = \varepsilon^*(\delta(\eta)) > 0$, which without loss of generality we can take smaller than $\bar\varepsilon(\eta)$, such that, 
if $\vert \varepsilon \vert < \varepsilon^*$, the Hamiltonian system \eqref{eq-hs00} possesses a periodic solution
$z$, with energy $\mathcal{H}_\varepsilon(z) = h$. By \eqref{loc-theorem}, this periodic solution has period $\sigma \in (\tau-\delta,\tau+\delta)$ and satisfies
\begin{equation*}
\mathrm{dist}(z(0),\mathcal{M}) < \delta,
\end{equation*}
that is, in view of the compactness of $\mathcal{M}$, $\vert z(0) - z^*_0 \vert < \delta$ for some $z^*_0 \in \mathcal{M}$. At this point, Lemma~\ref{lem-contdep} can be applied, yielding
\begin{equation*}
\vert z(t) - z^*(t) \vert < \eta,  \quad \text{for every $t\in[0,\tau+\eta]$,}
\end{equation*}
where $z^* = (x^*,p^*)$ is the solution of the unperturbed problem \eqref{eq-hs0} satisfying $z^*(0) = z^*_0$.
In particular
\begin{equation}\label{xifinale}
\vert x(t) - x^*(t) \vert < \eta, \quad \text{for every $t\in[0,\tau+\eta]$.}
\end{equation}
This concludes the proof of Theorem~\ref{teo:Efissata} in the case $d=3$, since, in view of Proposition \ref{topologiaM}, 
$x^*(t) = M x_\tau(t)$ for some $M \in SO(3)$.

In the case $d=2$, again by Proposition \ref{topologiaM}, one would find $x^*(t) = x_\tau(t+\theta)$ for some $\theta \in \mathbb{R}$;
however, it can be assumed $\theta = 0$, up to replacing in \eqref{xifinale} the solution $x$ with $x(\cdot + \theta)$, which of course is  a solution as well.

\subsection{Non-degeneracy in the $3d$-case}\label{section-4.2}

Let us recall that in this case $\mathcal{M} = \mathcal{M}^3_{\tau}$, where $\tau = \tau(h)$ is the period associated with the energy $h$ given by Lemma~\ref{lem:circolare}. Up to a rotation, we can assume that $z^*(0) = (x^*(0),p^*(0)) \in \mathcal{M}^3_{\tau}$ is the initial condition of the solution of 
\eqref{eq-hs0} with
\begin{equation*}
x^*(t) = R (\cos(\omega t),\sin (\omega t),0),
\end{equation*}
where 
\begin{equation*}
\omega = \dfrac{2\pi}{\tau}
\end{equation*} 
and $R$ satisfies \eqref{eq-lem:circolare}.

Denoting by $P_{\mathcal{H}_{0}}$ the monodromy operator at $z^*(0)$ of system \eqref{eq-hs0}, let us introduce the linear space
\begin{equation*}
\mathcal{F} =  \bigl{\{} v \in T_{z^*(0)} (\mathcal{H}_{0}^{-1}(h)) \colon v = P_{\mathcal{H}_{0}} v + \lambda X_{\mathcal{H}_{0}}(z^*(0)), \text{ for some } \lambda \in \mathbb{R} \bigr{\}}.
\end{equation*}
Since, by Proposition~\ref{topologiaM}, $\mathrm{dim}(\mathcal{M}^3_{\tau})=3$, we need to verify that
the dimension of $\mathcal{F}$ is equal to three.

Actually, relying on the discussion in Appendix~\ref{section-appendix}, we are going to perform the symplectic change of coordinates $\Psi$ defined in \eqref{eq-cambiovar4} and already considered in Section~\ref{section-3.3}, so as to verify the corresponding condition in the new coordinates. Accordingly, let us define the linear space
\begin{equation*}
\mathcal{G} :=  \bigl{\{} \eta \in T_{\zeta^*(0)} (\mathcal{K}_{0}^{-1}(h)) \colon \eta = P_{\mathcal{K}_{0}} \eta + \lambda X_{\mathcal{K}_{0}}(\zeta^*(0)), \text{ for some } \lambda \in \mathbb{R} \bigr{\}},
\end{equation*}
where $\zeta^*(0) = \Psi(z^*(0))$, $\mathcal{K}_0$ is the Hamiltonian defined in \eqref{defK03} (and $X_{\mathcal{K}_{0}}$ is the associated vector field), and $P_{\mathcal{K}_{0}}$ is the monodromy operator for system \eqref{eq-hs3} at $\zeta^*(0)$.
Recall that $T_{\zeta^*(0)} (\mathcal{K}_{0}^{-1}(h))$ and $X_{\mathcal{K}_{0}}(\zeta^*(0))$ are invariant for $P_{\mathcal{K}_{0}}$
and that, moreover, $X_{\mathcal{K}_{0}}(\zeta^*(0)) \in T_{\zeta^*(0)} (\mathcal{K}_{0}^{-1}(h))$. In what follows, we are going to denote by 
\begin{equation*}
\tilde P:= P_{\mathcal{K}_{0}}|_{T_{\zeta^*(0)} (\mathcal{K}_{0}^{-1}(h))}
\colon  T_{\zeta^*(0)} (\mathcal{K}_{0}^{-1}(h)) \to T_{\zeta^*(0)} (\mathcal{K}_{0}^{-1}(h))
\end{equation*}
the restricted monodromy operator.

We now claim that the five-dimensional space $T_{\zeta^*(0)} (\mathcal{K}_{0}^{-1}(h))$ can be written as
\begin{equation}\label{claim-final}
T_{\zeta^*(0)} (\mathcal{K}_{0}^{-1}(h)) = \mathrm{ker}( \mathrm{Id} - \tilde P) \oplus V_2,
\end{equation}
where $V_2$ is a two-dimensional subspace invariant for $\tilde P$ and such that
$\mathrm{Id} - \tilde P \colon V_2 \to V_2$ is invertible.

Assuming by now that the claim is true, the conclusion follows easily. Indeed, writing $\eta \in T_{\zeta^*(0)} (\mathcal{K}_{0}^{-1}(h))$
as 
\begin{equation*}
\eta = \eta_1 + \eta_2, 
\qquad 
\text{with $\eta_1 \in \mathrm{ker}( \mathrm{Id} - \tilde P)$, $\eta_2 \in V_2$,}
\end{equation*}
and recalling that $X_{\mathcal{K}_{0}}(\zeta^*(0)) \in \mathrm{ker}( \mathrm{Id} - \tilde P)$,
one has that $\eta \in \mathcal{G}$ if and only if there exists $\lambda \in \mathbb{R}$ such that
\begin{equation*}
( \mathrm{Id} - \tilde P) \eta_1 = \lambda X_{\mathcal{K}_{0}}(\zeta^*(0)) \quad \text{ and }  \quad( \mathrm{Id} - \tilde P) \eta_2 = 0.
\end{equation*}
Now, on the one hand the first equality is satisfied if and only if $\lambda = 0$, with arbitrary 
$\eta_1 \in \mathrm{ker}( \mathrm{Id} - \tilde P)$; on the other hand, since $\mathrm{Id} - \tilde P$ is invertible on $V_2$, the second equality is satisfied if and only if $\eta_2 = 0$. Thus
\begin{equation*}
\mathcal{G} = \mathrm{ker}( \mathrm{Id} - \tilde P).
\end{equation*}
Since, by \eqref{claim-final}, the dimension of $\mathrm{ker}( \mathrm{Id} -\tilde P)$ is three, we deduce that
the dimension of $\mathcal{G}$ is three, as desired.

Let us prove the claim. From the results in Section~\ref{section-3.3}, we know that 
the matrix $\mathbb{P}$ representing the linear operator 
$P_{\mathcal{K}_{0}}$ is $\mathbb{P} = e^{\tau\mathbb{A}}$, with $\mathbb{A}$ the matrix given by \eqref{defA3}.
Moreover, the matrix $\mathbb{A}$ has eigenvalues $\{0, 0, \pm i\omega'', \pm i \omega '\}$ where 
$\omega'' = \omega$ and $\omega' \notin 2\pi\mathbb{Z}/\tau$ (this can be proved as in Section~\ref{section-3.2}, taking into account that the monodromy operator is considered at time $\tau$, that is the \textit{minimal} period of the solution $\zeta^*$).
Notice that the (unique, up to scalar multiplication) eigenvector of $\mathbb{A}$ associated with the eigenvalue $0$ is given by
\begin{equation}\label{formula-campo}
\dot\zeta^*(0) = X_{\mathcal{K}_{0}}(\zeta^*(0)) = (0,\omega,0,0,0,0).
\end{equation}
On the other hand, let us denote by $\eta' = \eta'_{\mathrm{Re}} + i \eta'_{\mathrm{Im}}$ and $\bar{\eta}'$ 
a pair of complex eigenvectors of $\mathbb{A}$ associated with the eigenvalues $\pm i \omega'$ and by $\eta'' = \eta''_{\mathrm{Re}} + i \eta''_{\mathrm{Im}}$ and $\bar{\eta}''$ a pair of complex eigenvectors of $\mathbb{A}$ associated with the eigenvalues $\pm i \omega''$ ($\eta'_{\mathrm{Re}},\eta'_{\mathrm{Im}},\eta''_{\mathrm{Re}},\eta''_{\mathrm{Im}} \in \mathbb{R}^6$). We now show that
\begin{equation}\label{eq-span}
\mathrm{span}\langle \eta'_{\mathrm{Re}},\eta'_{\mathrm{Im}},\eta''_{\mathrm{Re}},\eta''_{\mathrm{Im}} \rangle \subset T_{\zeta^*(0)} (\mathcal{K}_{0}^{-1}(h)).
\end{equation}
Indeed, let us first observe that
\begin{equation*}
T_{\zeta^*(0)} (\mathcal{K}_{0}^{-1}(h)) = \bigl{\{} \eta \in \mathbb{R}^6 \colon \langle \nabla \mathcal{K}_0(\zeta^*(0)), \eta \rangle = 0 \bigr{\}}
\end{equation*}
and that, by \eqref{formula-campo},  $\nabla \mathcal{K}_0(\zeta^*(0)) = (0,0,0,0,\omega,0)$. 
Now, recalling \eqref{defA3} we see that, for both
$\kappa = i\omega'$ and $\kappa = i\omega''$, 
\begin{equation*}
\mathbb{A} \pm \kappa \, \mathrm{Id} = 
\begin{pmatrix}
\pm\kappa & 0 & 0 & A & 0 & 0 \vspace{1pt} \\
B & \pm\kappa & 0 & 0 & C & 0 \vspace{1pt} \\
0 & 0 & \pm\kappa & 0 & 0 & E \vspace{1pt} \\
D & 0 & 0 & \pm\kappa & -B & 0 \vspace{1pt} \\
0 & 0 & 0 & 0 & \pm\kappa & 0 \vspace{1pt} \\
0 & 0 & F & 0 & 0 & \pm\kappa
\end{pmatrix},
\end{equation*}
and thus any (complex) eigenvector of $\mathbb{A}$ associated with $\pm\kappa$ has fifth component equal to zero.
From this fact, \eqref{eq-span} follows.

Since the five vectors $X_{\mathcal{K}_{0}}(\zeta^*(0)), \eta'_{\mathrm{Re}},\eta'_{\mathrm{Im}},\eta''_{\mathrm{Re}},\eta''_{\mathrm{Im}}$ are linearly independent (coming from distinct eigenvalues of $\mathbb{A}$), and recalling that 
$X_{\mathcal{K}_{0}}(\zeta^*(0)) \in T_{\zeta^*(0)} (\mathcal{K}_{0}^{-1}(h))$, we can write
\begin{equation*}
T_{\zeta^*(0)} (\mathcal{K}_{0}^{-1}(h)) = V_3 \oplus V_2,
\end{equation*}
where
\begin{equation*}
V_2 = \mathrm{span} \langle \eta'_{\mathrm{Re}},\eta'_{\mathrm{Im}} \rangle \quad \text{ and } \quad V_3 = \mathrm{span} \langle X_{\mathcal{K}_{0}}(\zeta^*(0)), \eta''_{\mathrm{Re}},\eta''_{\mathrm{Im}} \rangle.
\end{equation*}
The spaces $V_2$ and $V_3$ are obviously invariant for $\mathbb{A}$ and, as a consequence, they are invariant for 
the matrix $\mathbb{P} = e^{\tau\mathbb{A}}$. Moreover, recalling that $\tau\omega' \notin 2\pi\mathbb{Z}$ and $\tau\omega'' = \tau\omega = 2\pi$,
an easy computation shows that 
\begin{equation*}
\mathbb{P}\eta \neq \eta, \; \text{ for every $\eta \in V_2$,} \quad \text{ and } \quad \mathbb{P}\eta = \eta, \; \text{ for every $\eta \in V_3$.}
\end{equation*}
This implies that $\mathrm{Id} - \tilde P \colon V_2 \to V_2$ is invertible and that
$V_3 = \mathrm{ker}( \mathrm{Id} - \tilde P)$, finally proving the claim \eqref{claim-final}.

\subsection{Non-degeneracy in the $2d$-case}\label{section-4.3}

Here, $\mathcal{M} = \mathcal{M}^2_{\tau,+}$, where $\tau = \tau(h)$ is the period associated with the energy $h$.

Following the very same arguments used in Section~\ref{section-4.2}, we can prove the analogous of \eqref{claim-final}, namely
\begin{equation*}
T_{\zeta^*(0)} (\mathcal{K}_{0}^{-1}(h)) = V_1 \oplus V_2,
\end{equation*}
where $V_2$ is as before and $V_1 = \mathrm{ker}( \mathrm{Id} - \tilde P)$ is the one-dimensional space spanned by the vector $X_{\mathcal{K}_{0}}(\zeta^*(0))$.
At this point, the same argument of Section~\ref{section-4.2} shows that the dimension of $\mathcal{G}$
is equal to one, that is the dimension of $\mathcal{M}^2_{\tau,+}$, as desired.

\begin{remark}\label{rem-continuazione}
As already observed in the Introduction, Theorem~\ref{teo:Efissata} in the case $d=2$ is not of bifurcation-type: indeed, what is obtained in this case is the existence of a continuation of the unperturbed periodic orbit $\{x_\tau(t)\}_{t \in \mathbb{R}}$ into a perturbed periodic orbit $\{x(t)\}_{t \in \mathbb{R}}$. As shown above, this result can be established using Theorem~\ref{teo-weinstein0}; however, it also follows from more classical theories.
Indeed, from the discussion in Section~\ref{section-4.2}, and taking into account the fact that $\mathbb{P} = e^{\tau\mathbb{A}}$ is
a symplectic matrix, it is immediate to see that the algebraic multiplicity of $1$ as an eigenvalue of the matrix $\mathbb{P}$ is equal to two, and hence one can apply, for instance, \cite[Theorem~2.4]{MoZe-05}.

We also notice that we have not considered the case $\mathcal{M} = \mathcal{M}^2_{\tau,-}$: indeed, this choice would lead in general to the solution $x(-t)$.
\hfill$\lhd$
\end{remark}

\begin{remark}
Let us notice that a variational formulation for the
fixed-energy problem associated with an equation like \eqref{eq:Efissata-sec} has been recently provided in the paper 
\cite{BoDaMu-23}, relying on the introduction of a suitable relativistic Maupertuis functional. Accordingly, in principle one could try to prove a result like Theorem~\ref{teo:Efissata} by applying Theorem
\ref{thastratto1} to the Maupertuis functional (in the framework of classical mechanics, such a strategy has been used in \cite{AmBe-92}). Within this approach, however, solutions of the fixed-energy problem for \eqref{eq:Efissata-sec} correspond to time-reparameterizations of critical points of the Maupertuis functional: hence, the manifold of circular solutions has to be changed accordingly and we have not investigated in details how the corresponding non-degeneracy condition looks like. Our approach based on the use of Theorem~\ref{teo-weinstein0} for the associated Hamiltonian system seems to be more direct, since it allows us to use the computations of the monodromy operator already developed in Section~\ref{section-3} for the fixed-period problem. On the contrary, it is possible that the use of a variational approach could lead to multiplicity of geometrically distinct solutions, cf.~\cite[Remark~4]{AmBe-92}. 
\hfill$\lhd$
\end{remark}

\appendix 
\section{The monodromy operator and change of variables}\label{section-appendix}

Let us consider the system of differential equations
\begin{equation}\label{ode1}
\dot z = f(z), \quad z \in \mathfrak{M},
\end{equation}
where $\mathfrak{M}$ is a manifold and $f$ is a $\mathcal{C}^1$-vector field on $\mathfrak{M}$.
The associated flow map is denoted by $\varphi^t_f$ (that is, $t \mapsto \varphi^t_f(\xi)$
is the unique - local - solution with $\varphi^0_f(\xi) = \xi$). 

Given a $\mathcal{C}^1$-diffeomorphism $\Psi \colon \mathfrak{M} \to \mathfrak{M}'$ (of course, $\mathfrak{M}'$ is a manifold), let us perform the change of variables 
\begin{equation*}
\zeta = \Psi(z).
\end{equation*} 
As well known (see, for instance, \cite[Chapter 1.1]{MoZe-05}), system \eqref{ode1} is accordingly transformed into the system
\begin{equation}\label{ode2}
\dot \zeta = g(\zeta), \quad \zeta \in \mathfrak{M}',
\end{equation}
where
\begin{equation}\label{cambio-campo}
g(\zeta) = \mathrm{d}\Psi\bigl( \Psi^{-1}(\zeta)\bigr) f\bigl( \Psi^{-1}(\zeta)\bigr).
\end{equation}
Moreover, denoting by $\varphi^t_g$ the flow associated with system \eqref{ode2}, it holds that
\begin{equation}\label{cambio-flusso}
\varphi^t_g \circ \Psi = \Psi \circ \varphi^t_f.
\end{equation}

Let us now suppose that $z \in \mathfrak{M}$ is the initial condition (at time $t=0$) of a periodic solution of system \eqref{ode1} with (not necessarily minimal) period $\tau > 0$, that is
\begin{equation}\label{punto-fisso}
\varphi^{\tau}_f(z) = z.
\end{equation}
The so-called monodromy operator $P_f \colon T_z \mathfrak{M} \to T_z \mathfrak{M}$ is defined as $P_f = \mathrm{d}_\xi \varphi^\tau_f(\xi)|_{\xi = z}$. In the new coordinates, $\zeta = \Psi(z)$ is of course the initial condition of a $\tau$-periodic solution of system \eqref{ode2}.
Denoting by $P_g \colon T_{\zeta} \mathfrak{M}' \to T_{\zeta} \mathfrak{M}'$ the corresponding monodromy operator,
differentiating \eqref{cambio-flusso}, and using \eqref{punto-fisso}, we easily obtain
\begin{equation}\label{cambio-monodromia}
P_g \circ Q = Q \circ P_f,
\end{equation}
where $Q \colon T_z \mathfrak{M} \to T_{\zeta} \mathfrak{M}'$ is the linear map given by $Q = \mathrm{d}\Psi(z)$. Notice that $Q$ is an isomorphism, since $\Psi$ is a diffeomorphism. 

Formula \eqref{cambio-monodromia} shows how the monodromy operator is transformed via a change of variables. A first easy consequence of it is that the spaces of fixed points of $P_f$ and $P_g$ (that is, the space of $\tau$-periodic solutions of the linearizations of \eqref{ode1} and \eqref{ode2} at $z$ and $\zeta$ respectively) are isomorphic as linear spaces and, thus, have the same dimension: we will use this fact in the proof of the non-degeneracy condition $(iii)$ required in Theorem~\ref{thastratto1}, see Section~\ref{section-3.2} and Section~\ref{section-3.3}.

A more subtle consequence will be used in Section~\ref{section-4}. Indeed, in the proof of the non-degeneracy condition $(iii)$ of Theorem~\ref{teo-weinstein0}, we will be concerned with the linear space
\begin{equation*}
\mathcal{F} =  \bigl{\{} v \in T_z (\mathcal{H}_{0}^{-1}(h)) \colon v = P_f v + \lambda f(z), \text{ for some } \lambda \in \mathbb{R} \bigr{\}},
\end{equation*}
where $f(z) = X_{\mathcal{H}_0}(z)$ is an Hamiltonian vector field. In view of \eqref{cambio-monodromia} and recalling \eqref{cambio-campo}, it is easy to see that the space $\mathcal{F}$ is isomorphic, by $Q$, to the space
\begin{equation*}
\mathcal{G} =  \bigl{\{} \eta \in Q \bigl(T_z (\mathcal{H}_{0}^{-1}(h))\bigr) \colon \eta = P_g \eta + \lambda g(\zeta), \text{ for some } \lambda \in \mathbb{R} \bigr{\}}.
\end{equation*}
Moreover, if the diffeomorphism $\Psi$ is symplectic, then $g(\zeta) = \mathcal{K}_0(\zeta)$ with 
$\mathcal{K}_0(\zeta) = \mathcal{H}_0(\Psi^{-1}(\zeta))$ and 
$Q (T_z (\mathcal{H}_{0}^{-1}(h)) ) = T_{\zeta}\mathcal{K}_{0}^{-1}(h)$. 
Summing up, the dimension of the space $\mathcal{F}$ is preserved under symplectic change of coordinates: we will use this fact in Section~\ref{section-4.2}.

\bibliographystyle{elsart-num-sort}
\nocite*{}
\bibliography{BoFePa-biblio}

\end{document}